\newtheorem{thm}{Theorem}
\newtheorem{prop}{Proposition}[section]
\newtheorem{lem}[prop]{Lemma}
\newtheorem{cor}[prop]{Corollary}
\theoremstyle{definition}
\newcommand{\T}{\mathbb{T}}
\newcommand{\R}{\mathbb{R}}
\newcommand{\N}{\mathbb{N}}
\newcommand{\cS}{\mathcal{S}}
\newcommand{\cM}{\mathcal{M}}
\newcommand{\cI}{\mathcal{I}}
\newcommand{\cL}{\mathcal{L}}
\newcommand{\bV}{\mathbb{V}}
\newcommand{\cP}{\mathcal{P}}
\newcommand{\cG}{\mathcal{G}}
\newcommand{\cC}{\mathcal{C}}
\newcommand{\Sg}{\mathrm{Sg}\,}
\newcommand{\Span}{\mathrm{Span}\,}
\newcommand{\bmat}[1]{\begin{bmatrix}#1\end{bmatrix}}
\newcommand{\tcA}{\widetilde{\mathcal{A}}}
\newcommand{\tcI}{\widetilde{\mathcal{I}}}
\newcommand{\tcM}{\widetilde{\mathcal{M}}}
\newcommand{\argmin}{\mathrm{arg\,min}}
\newcommand{\Id}{\, \mathrm{I}}
\title{On the tangent cones of Aubry sets}
\author{Ke Zhang}
\address{Department of Mathematics, University of Toronto \\Ontario, Canada, M5S2E4}
\thanks{Supported by NSERC Discovery grant, reference number 436169-2013. }
\subjclass[2010]{Primary 37J50; Secondary 37J05}
\keywords{Tonelli Hamiltonian, Mather theory, Aubry set, weak KAM theory, Green bundles, tangent cones}
\date{\today}
\begin{document}
\maketitle

\selectlanguage{french}
\begin{abstract}
	Nous montrons que le cône paratangent de l’ensemble d’Aubry du Hamiltonien de Tonelli est contenu dans un cône borné par les fibrés de Green. Notre résultat améliore un résultat précédent de M.-C. Arnaud sur les cônes tangents des ensembles d’Aubry.
\end{abstract}
\selectlanguage{english}
\begin{abstract}
	We show that the paratingent cone of the Aubry set of the Tonelli Hamiltonian is contained in a cone bounded by the Green bundles. Our result improves the earlier result of M.-C. Arnaud on tangent cones of the Aubry sets. 
\end{abstract}

\section{Introduction}

Let $H(x, p)$ be a Tonelli Hamiltonian on $\T^n \times \R^n$, the Aubry set $\tcA \subset \T^n \times \R^n$ is one of the fundamental variationally defined invariant sets. Arnaud (see for example \cite{Arn2010}, \cite{Arn2011}, \cite{Arn2012}, \cite{Arn2016}) developed a theory linking the regularity of the Aubry set to the Green bundles. 

The Green bundles $\cG_\pm(x, p) \subset \R^n \times \R^n$ is a family of invariant Lagrangian subspaces transversal to the vertical $\{0\} \times \R^n$, which means they are given by the graph of symmetric matrices: $\cG_\pm = \{(h, G_\pm h):\, h \in \R^n\}$. Given two such Lagrangian subspaces  $\cS_i = \{(h, S_i h)\}$, $i =1, 2$, we say $\cS_2 > \cS_1$ if $S_2 > S_1$, meaning $S_2 - S_1$ is positive definite. Then $\cG_- \le \cG_+$. 
We will also consider the \emph{modified Green bundle} (Arnaud, \cite{Arn2010}) $\widetilde{\cG}_-$ and $\widetilde{\cG}_+$, defined by the matrices $G_+ + (G_+ - G_-)$ and $G_- - (G_+ - G_-)$. Clearly $\widetilde{\cG}_- \le \cG_- \le \cG_+ \le \widetilde{\cG}_+$. 

Let $\cS_i = \{(h, S_i h)\}$, $i =1, 2$ be such that $S_1 \le S_2$, We define the \emph{cone} between $\cS_1, \cS_2$ as:
\[
	C(\cS_1, \cS_2) = \{ (h, Sh): \, S_1 \le S \le S_2, \quad h \in \R^n\}. 
\]

We will consider the following different definitions of tangent cones. 
\begin{itemize}
 \item  The \emph{contingent cone} $\cC_z(\tcA)$ of the  set $\tcA$ at $z \in \tcA$, is defined as the set of all limit points $\lim_{n \to \infty} t_n(z_n - z)$ for $t_n > 0$ , $z_n \in \tcA$ and $z_n \to z$.
 \item The \emph{limit contingent cone} $\widetilde{\cC}_z(\tcA)$ is the set of all limit points of vectors $v_n \in \cC_{z_n}(\tcA)$ with $z_n \in \tcA$ and $z_n \to z$. 
 \item The \emph{paratingent cone} $\cP_x(\tcA)$ is defined as the limit points of $\lim_{n \to \infty} t_n(z_n - w_n)$, where $t_n > 0 $, $z_n, w_n \in \tcA$ and $z_n , w_n \to z$. 
\end{itemize}
Clearly we have $\cC_z(\tcA) \subset \widetilde\cC_z (\tcA) \subset \cP_z(\tcA)$. 
The following result is due to Arnaud:
\begin{thm}[\cite{Arn2011}, \cite{Arn2012}]
	In the case of Tonelli Hamiltonian, we have $\widetilde\cC_z(\tcA) \subset C(\widetilde{\cG}_-(z), \widetilde{\cG}_+(z))$. In the  case of a twist map on the space $\T \times \R$, the result improves to $\cP_z(\tcA) \subset C(\cG_-(z), \cG_+(z))$. 
\end{thm}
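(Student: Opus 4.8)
The plan is to reduce the assertion to a statement about Jacobi fields along calibrated curves, and then to extract the needed bound from the global minimality of the orbits through \emph{pairs} of nearby points of $\tcA$.

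\emph{Reductions.} Let $L$ be the Tonelli Lagrangian dual to $H$ and $\phi_t$ the Hamiltonian flow. By Mather's graph theorem $\tcA$ is the graph of a Lipschitz section over $\cA=\pi(\tcA)$, so every vector in $\cP_z(\tcA)$ — a fortiori in $\widetilde\cC_z(\tcA)$ or $\cC_z(\tcA)$ — is of the form $v=(h,w)$ with $|w|\le\mathrm{Lip}\,|h|$; in particular $h\neq 0$ whenever $v\neq 0$, so it suffices to prove that $w$ lies in the slab $\{Mh:\ \widetilde\cG_-(z)\le M\le\widetilde\cG_+(z)\}$ (with $\cG_\pm$ in the twist case). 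Since $\tcA$ is $\phi_t$-invariant, $d\phi_t(z)$ carries $\cC_z(\tcA)$ onto $\cC_{\phi_t z}(\tcA)$, and likewise for $\widetilde\cC$ and $\cP$; and $d\phi_t$ acts on Lagrangian subspaces transverse to the vertical through the Riccati flow of $L$, which is monotone in the initial datum because $H$ is convex in $p$. The Green bundles $\cG_\pm$ are the two extremal everywhere-defined solutions of this Riccati flow along the conjugate-point-free orbits of $\tcA$; they are invariant, satisfy $\cG_-\le\cG_+$, and, being the monotone limits $G_+(z)=\inf_{s>0}G^+_s(z)$, $G_-(z)=\sup_{s>0}G^-_s(z)$ of the vertical transported from $\pm\infty$ (with $G^\pm_s$ continuous near $\tcA$), the matrix bundle $\cG_+$ is upper semicontinuous and $\cG_-$ lower semicontinuous. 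Hence $\widetilde\cG_+=2\cG_+-\cG_-$ is u.s.c., $\widetilde\cG_-=2\cG_--\cG_+$ is l.s.c., and by compactness all four bundles are bounded on $\tcA$. Granting the key claim
\[
 \cC_z(\tcA)\ \subset\ C\bigl(\widetilde\cG_-(z),\widetilde\cG_+(z)\bigr)\qquad\text{for every }z\in\tcA,
\]
the statement for $\widetilde\cC_z$ follows formally: if $v=\lim_k v_k$ with $v_k=(h_k,M_kh_k)\in\cC_{z_k}(\tcA)$, $\widetilde\cG_-(z_k)\le M_k\le\widetilde\cG_+(z_k)$, $z_k\to z$, then along a subsequence on which $M_k$ and $G_\pm(z_k)$ converge the semicontinuity forces the limit $M$ to satisfy $\widetilde\cG_-(z)\le M\le\widetilde\cG_+(z)$, while $w=\lim M_kh_k=Mh$. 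So everything reduces to the displayed key claim (the twist case is treated directly for $\cP_z$).

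\emph{The key estimate (Tonelli case).} Let $z_n\to z$ in $\tcA$ with $t_n(z_n-z)\to v=(h,w)$; necessarily $t_n\to\infty$. Write $\gamma(s)=\pi\phi_s(z)$, $\gamma_n(s)=\pi\phi_s(z_n)$, lifted so they are $C^1$-close on each $[-T,T]$; both are $L$-minimizers there. Taylor-expanding the action and using the Euler--Lagrange equation, compare $\gamma_n$, via its minimality over $[-T,T]$, with the competitor that coincides with $\gamma$ on $[-T+1,T-1]$ and interpolates to $\gamma_n(\pm T)$ near the ends; add the symmetric inequality obtained by exchanging $\gamma_n$ and $\gamma$; multiply by $t_n^{2}$ and let $n\to\infty$. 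This produces, for the configuration Jacobi field $\eta:=\lim_n t_n(\gamma_n-\gamma)$ along $\gamma$, an inequality of the shape
\[
 0\ \ge\ \mathcal{Q}^{(T)}\bigl(\eta(\pm T),\dot\eta(\pm T)\bigr)\ +\ 2\,I^{[-T,T]}_{\gamma}\bigl(\eta^{*}_T\bigr),
\]
where $I_\gamma$ is the index form, $\eta^{*}_T$ the Jacobi field matching $\eta$ at $\pm T$, and $\mathcal{Q}^{(T)}$ a boundary quadratic form built from the finite-time Green matrices $G^{\pm}_{\mp T}$. Letting $T\to\infty$, these finite-time matrices converge monotonically to $\cG_\pm$ along $\gamma$, and a bookkeeping of $\mathcal{Q}^{(T)}$ pins the matrix $M$ with $w=Mh$ (the slope of $\eta$ at $s=0$) into $[\widetilde\cG_-(z),\widetilde\cG_+(z)]$; the factor $2$ — equivalently, the passage from $\cG_\pm$ to the modified bundles — is precisely the residue of having summed the two one-sided comparisons.

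\emph{The twist case, and the main obstacle.} On $\T\times\R$ one can do better using Aubry's fundamental lemma: two distinct minimal configurations cross at most once. For $z_n,w_n\to z$ in $\tcA$ with $t_n(z_n-w_n)\to v$, the scalar difference along the two orbits changes sign at most once, so in the limit the difference-Jacobi field $\eta$ is, apart from a degenerate case one must verify contributes nothing, one-signed on each side of a single index, and the no-conjugate-point structure of the minimal orbit confines the slope of such a field at $s=0$ to $[\cG_-(z),\cG_+(z)]$; since only $z_n,w_n\to z$ was used, this yields $\cP_z(\tcA)\subset C(\cG_-(z),\cG_+(z))$ without modification. The hard part is the limit $T\to\infty$ in the Tonelli estimate — controlling $\mathcal{Q}^{(T)}$ and the competitor corrections uniformly and showing that the only surviving effect is the $(\cG_+-\cG_-)$-wide safety margin of the modified bundles. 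This is exactly where the absence of a higher-dimensional analogue of Aubry's crossing lemma is felt, and why the Tonelli conclusion is the modified cone rather than $C(\cG_-,\cG_+)$.
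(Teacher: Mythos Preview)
First, note that the paper does not itself prove this theorem: it is quoted from \cite{Arn2011}, \cite{Arn2012}. The paper's own contribution is the strictly stronger Theorem~\ref{thm:main}, namely $\cP_z(\tcA)\subset C(\cG_-(z),\cG_+(z))$ for general Tonelli Hamiltonians, which subsumes both assertions of the statement. So the relevant comparison is with that proof.

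Your outline follows Arnaud's original strategy---Jacobi fields along calibrated curves, index-form comparison with competitors, Aubry's crossing lemma in dimension one---and is entirely different from what the paper does. The paper never writes down a Jacobi field. It instead (i) characterizes $C(\cS_1,\cS_2)=\{v:\Sg_{\cS_1,\cS_2}(v)\ge 0\}$ by pure symplectic linear algebra (Proposition~\ref{prop:cone-sign}); (ii) proves an \emph{anisotropic} Lipschitz estimate via convex duality (Lemma~\ref{lem:aniso-lipshitz}): if $f$ is $B$-semi-concave, $g$ is $(-A)$-semi-convex with $B>A$, then on $\argmin(f-g)$ one has $\|df(x_2)-df(x_1)-\tfrac12(A+B)(x_2-x_1)\|_{(B-A)^{-1}}\le\tfrac12\|x_2-x_1\|_{B-A}$, which together with (i) yields $\cP_z\tcI_{f,g}\subset C(\cS_A,\cS_B)$ directly; and (iii) shows that near any Aubry point a weak KAM solution $u$ is $(G_T+\epsilon\,\Id)$-semi-concave and its conjugate $w$ is $-(G_{-T}-\epsilon\,\Id)$-semi-convex, the constants being read off from $\partial^2 A^T$ (Lemma~\ref{lem:local-semi-concave}). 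Letting $T\to\infty$ and then $\epsilon\to 0$ delivers the sharp cone for the full paratingent cone in one stroke---no factor-of-two loss, no modified bundles $\widetilde{\cG}_\pm$, and no separate twist-map argument.

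As written, your sketch also has a genuine gap at the ``key estimate''. You display an inequality ``of the shape'' $0\ge\mathcal Q^{(T)}+2I_\gamma(\eta^*_T)$ and then assert that ``bookkeeping of $\mathcal Q^{(T)}$ pins the matrix $M$ into $[\widetilde{\cG}_-,\widetilde{\cG}_+]$'', but neither $\mathcal Q^{(T)}$ nor the passage to a matrix inequality is ever computed---and you yourself flag the uniform control as $T\to\infty$ as ``the hard part''. There is a further structural issue: a single vector $(h,w)$ does not determine a slope matrix $M$; one must \emph{exhibit} some $M$ with $\widetilde G_-\le M\le\widetilde G_+$ and $Mh=w$, and your outline gives no mechanism for producing it. The paper's approach sidesteps both difficulties: the estimate of Lemma~\ref{lem:aniso-lipshitz} is sharp because it comes from convex duality rather than from summing two one-sided action comparisons (so the ``safety margin'' $\cG_+-\cG_-$ never appears), and the cone characterization of Proposition~\ref{prop:cone-sign} supplies the required matrix $M$ automatically.
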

Arnaud asks in \cite{Arn2016}, Question 6, 7, whether the two improvements (limit contingent cone to paratingent cone, and modified Green bundle to original Green bundle) are possible for general Tonelli Hamiltonians. We answer both questions positively.
\begin{thm}\label{thm:main}
	For the Tonelli Hamiltonians, we have $\cP_z(\tcA) \subset C(\cG_-(z), \cG_+(z))$ for all $z \in \tcA$. 
\end{thm}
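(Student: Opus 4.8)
The plan is to propagate the chord direction along the orbit by the linearized flow, use the second-order structure of the weak KAM solutions to pin it between the Green bundles in a scalar sense \emph{along the whole orbit}, and then run a linear-symplectic argument on the linearized Hamiltonian flow to upgrade this to the matrix inequality defining $C(\cG_-,\cG_+)$. To begin, I would reduce: write $v=\lim_n t_n(z_n-w_n)\in\cP_z(\tcA)$ with $z_n,w_n\in\tcA$, $z_n,w_n\to z$, $t_n>0$, and assume $v\neq0$. If $(t_n)$ had a bounded subsequence, then along it $z_n-w_n\to v/t_0\neq0$, contradicting $z_n,w_n\to z$; hence $t_n\to\infty$ and $|z_n-w_n|=O(1/t_n)$. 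Since $\tcA$ is a Lipschitz graph with a uniform constant $L$ by Mather's graph theorem, writing $v=(h,w)$ in the $(x,p)$-splitting gives $|w|\le L|h|$, so $h\neq0$; the goal becomes to produce a symmetric $S$ with $G_-(z)\le S\le G_+(z)$ and $w=Sh$, the case $h=0$ being excluded.

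Next, propagation. For fixed $s$, $\phi_s z_n-\phi_s w_n=D\phi_s(w_n)(z_n-w_n)+O(|z_n-w_n|^2)$; multiplying by $t_n$, the error is $t_n\cdot O(1/t_n^2)\to0$ while $D\phi_s(w_n)\to D\phi_s(z)$, so after a diagonal extraction $D\phi_s(z)v\in\cP_{\phi_s z}(\tcA)$ for every $s\in\R$. Writing $D\phi_s(z)v=(h(s),w(s))$ and applying the uniform Lipschitz-graph bound at each $\phi_s z$, we get $|w(s)|\le L|h(s)|$ for all $s$; in particular $h(s)\neq0$ for all $s$, i.e.\ the linearized orbit of $v$ never meets the vertical subspace.

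Then I would bring in the weak KAM solutions. Let $u^-$ be a backward- and $u^+$ a forward-calibrated weak KAM solution; both are differentiable on the projected Aubry set $\pi(\tcA)$ with $du^-=du^+=p$ there, $u^-$ is semiconcave and $u^+$ semiconvex, and — writing $u^\mp$ as an infimum, resp.\ supremum, of finite-time actions, whose Hessian at the endpoint is exactly the matrix of the vertical pushed forward, resp.\ backward, along the calibrated curve, and which decreases, resp.\ increases, to $G_\pm$ — the sharp one-sided second-order bound of $u^-$, resp.\ $u^+$, at a point $z'\in\tcA$ is $G_+(z')$, resp.\ $G_-(z')$, uniformly for $z'$ near $z$ in $\tcA$ (this is the second-order part of Arnaud's theory). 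Pairing the semiconcavity estimate of $u^-$ at $\pi(w_n)$ with the one at $\pi(z_n)$ yields $\langle p_{z_n}-p_{w_n}-G_+(w_n)(x_{z_n}-x_{w_n}),\,x_{z_n}-x_{w_n}\rangle\le o(|x_{z_n}-x_{w_n}|^2)$, and the reverse inequality with $G_-$ from $u^+$; multiplying by $t_n^2$ and letting $n\to\infty$ gives the scalar pinching $\langle G_-(z)h,h\rangle\le\langle w,h\rangle\le\langle G_+(z)h,h\rangle$, which by the propagation step holds along the whole orbit:
\[
\langle G_-(\phi_s z)h(s),h(s)\rangle\ \le\ \langle w(s),h(s)\rangle\ \le\ \langle G_+(\phi_s z)h(s),h(s)\rangle,\qquad s\in\R .
\]

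Finally, the heart of the matter is to upgrade this scalar pinching to the matrix inequality, which I would do purely with the linearized Hamiltonian flow. Put $\delta_\pm(s)=w(s)-G_\pm(\phi_s z)h(s)$ for the defects of the orbit of $v$ relative to the (invariant) Green bundles. Using the Riccati equation for $G_\pm$ together with the linearized Hamilton equations one obtains the monotonicity identity $\frac{d}{ds}\langle\delta_\pm(s),h(s)\rangle=\langle\delta_\pm(s),H_{pp}\,\delta_\pm(s)\rangle\ge0$, since $H_{pp}>0$ by the Tonelli hypothesis; moreover $\delta_\pm$ solves a linear ODE, hence is either identically $0$ (then $v$ lies in a Green bundle and we are done) or never $0$. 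Combining the monotonicity and the sign constraints of the pinching as $s\to\pm\infty$ with the conserved symplectic pairings $s\mapsto\omega\big(D\phi_s(z)v,\xi(s)\big)$, where $\xi(s)$ is a line of $\cG_\pm$ flowed along the orbit, and with the description of $\cG_\pm$ as limits of the forward, resp.\ backward, iterates of the vertical, one should be able to force $\delta_-(0)$ and $\delta_+(0)$ to be compatible with a symmetric matrix $S\in[G_-(z),G_+(z)]$ realizing $w=Sh$. I expect this last step to be the main obstacle: the pinching at one time, or at finitely many times, yields only the weaker bound in terms of the modified Green bundles $\widetilde\cG_\pm$ (this is essentially why Arnaud's estimate produces $\widetilde\cG_\pm$), and it is the interplay between the pinching along the \emph{entire} orbit, the monotonicity identity, and the symplectic invariance of $\cG_\pm$ that must be exploited to recover the genuine bundles.
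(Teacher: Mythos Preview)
Your proposal is incomplete, and the gap is exactly where you yourself flag it: the final step of upgrading the scalar pinching
\[
\langle G_-(\phi_s z)h(s),h(s)\rangle\ \le\ \langle w(s),h(s)\rangle\ \le\ \langle G_+(\phi_s z)h(s),h(s)\rangle
\]
along the orbit to the matrix statement $(h,w)\in C(\cG_-(z),\cG_+(z))$. Your monotonicity identity $\frac{d}{ds}\langle\delta_\pm,h\rangle=\langle H_{pp}\delta_\pm,\delta_\pm\rangle\ge0$ is correct, but combined with the sign constraints it only tells you that $s\mapsto\langle\delta_+(s),h(s)\rangle$ is nondecreasing and $\le 0$, and $s\mapsto\langle\delta_-(s),h(s)\rangle$ is nondecreasing and $\ge 0$. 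This does not pin down $\delta_\pm(0)$ in the direction transverse to $h(0)$, which is exactly what separates the scalar pinching from the cone condition (e.g.\ with $G_-=0$, $G_+=\Id$ in $\R^2$, the vector $h=(1,0)$, $w=(0,1)$ satisfies the scalar pinching but lies outside the cone). You gesture at using symplectic pairings with flowed lines of $\cG_\pm$, but no concrete mechanism is given, and I do not see how to close the argument along these lines.

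The paper takes a very different route that avoids this difficulty entirely. The point is that the semiconcavity of $u^-$ and semiconvexity of $u^+$ with the \emph{matrix} moduli $G_T+\eps\Id$ and $G_{-T}-\eps\Id$ (your own step 3) already contain more than the scalar pinching: a convex-duality computation (Lemma~3.2) shows that on $\argmin(u^--u^+)$ one has the anisotropic bound
\[
\bigl\| p_2-p_1-\tfrac12(A+B)(x_2-x_1)\bigr\|_{U^{-1}}\ \le\ \tfrac12\,\|x_2-x_1\|_U,\qquad U=B-A,
\]
with $A=G_{-T}-\eps\Id$, $B=G_T+\eps\Id$. A separate symplectic-algebraic characterization (Proposition~2.2) shows that this inequality on a limit vector $(h,k)$ is \emph{equivalent} to $(h,k)\in C(\cS_A,\cS_B)$. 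One then lets $T\to\infty$ and $\eps\to0$. No propagation along the orbit, no Riccati equation, and no upgrading step are needed: by extracting the full anisotropic estimate from semiconcavity rather than only its scalar trace, the paper obtains the cone condition at $z$ directly.
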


Arnaud also discovered the relation between Green bundles and the Lyapunov exponents of minimal measures. Among other results, she proved that (\cite{Arn2012}) if a minimal measure has only zero exponents, then on the support of the minimal measure, the Aubry set is \emph{$C^1$-isotropic}, meaning $\widetilde\cC_z(\tcA)$ is contained in a Lagrangian subspace. As mentioned in \cite{Arn2016}, our result improves this regularity to \emph{$C^1$-regular}, meaning $\cP_z(\tcA)$ is contained in a Lagrangian subspace. 

We prove our result by first giving an alternative characterization of the symplectic cone, see Section~\ref{sec:symp-con}. We then develop an anisotropic version of the standard semi-concavity, and use it to derive an upper bound for the paratingent cone, see Section~\ref{sec:semi-concave}. Finally in Section~\ref{sec:green-bundle}, we show that the weak KAM solutions satisfy the new semi-concavity conditions, and use it to prove our main theorem. 

\section{Characterization of the symplectic cone}
\label{sec:symp-con}

A subset $K \subset \R^{2n}$ is called a cone if $0 \in K$ and $\lambda K \subset K$ for all $\lambda > 0$. Under our definition, a cone is uniquely determined by its intersection with the unit sphere. The space of all non-trivial closed cones then form a complete metric space using the Hausdorff topology on the unit sphere. In particular, this also induces a metric on the space of non-zero subspaces. 

We give an alternative characterization for the cone $C(\cG_-, \cG_+)$. 
Let $\cL_1, \cL_2 \subset \R^{2n}$ be Lagrangian subspaces. Define a function $\Sg: 
\R^{2n} \to \R \cup \{-\infty\}$ by 
\[
	\Sg_{\cL_1, \cL_2}(v) = \omega(v_1, v_2), \quad v = v_1 + v_2, \quad v_1 \in \cL_1, v_2 \in \cL_2, 
\]
and $\Sg_{\cL_1, \cL_2}(v) = - \infty$ if $v \notin \cL_1+ \cL_2$. 
\begin{lem}
	The function $\Sg(v)$ is well defined.
\end{lem}
\begin{proof}
	Suppose $u \in \cL_1 \cap \cL_2$, then 
	\[
		\omega(v_1 + u, v_2 - u) = \omega(v_1, v_2) + \omega(v_1, -u) + \omega(u, v_2) + \omega(u, -u) = \omega(v_1, v_2)
	\]
	since $\cL_1, \cL_2$ are Lagrangian. Therefore $\Sg(v)$ is well defined. 
\end{proof}

We have the following characterization: 
\begin{prop}\label{prop:cone-sign}
Let $\cS_1 \le \cS_2$. Then 
\[
	C(\cS_1, \cS_2) = \{v: \quad \Sg_{\cS_1, \cS_2}(v) \ge 0\}. 
\]
\end{prop}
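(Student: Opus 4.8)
The plan is to exploit the fact that all the Lagrangian subspaces in sight are transversal to the vertical $\{0\} \times \R^n$, so we may work entirely with the generating symmetric matrices. Write $\cS_i = \{(h, S_i h) : h \in \R^n\}$ with $S_1 \le S_2$. The key preliminary computation is an explicit formula for $\Sg_{\cS_1, \cS_2}$ on such a pair. Given $v = (h, \eta) \in \R^{2n}$, I want to decompose $v = v_1 + v_2$ with $v_i \in \cS_i$, i.e. $v_1 = (h_1, S_1 h_1)$, $v_2 = (h_2, S_2 h_2)$, $h_1 + h_2 = h$, and $S_1 h_1 + S_2 h_2 = \eta$. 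Eliminating gives $(S_2 - S_1) h_2 = \eta - S_1 h$, so $v \in \cS_1 + \cS_2$ exactly when $\eta - S_1 h$ lies in the range of $S_2 - S_1$, and in that case $\omega(v_1, v_2) = \langle h_1, S_2 h_2\rangle - \langle h_2, S_1 h_1 \rangle = \langle S_2 h_2, h_1 \rangle - \langle S_1 h_1, h_2 \rangle$; using $S_i$ symmetric and $h_1 = h - h_2$ this simplifies (after a short computation) to $\langle (S_2 - S_1) h_2, h_2 \rangle$ possibly up to a sign and a term that vanishes — I will pin down the exact constant in the write-up, the point being that $\Sg_{\cS_1, \cS_2}(v)$ equals a nonnegative quadratic form $\langle (S_2 - S_1) h_2, h_2\rangle$ evaluated at any solution $h_2$ of $(S_2 - S_1) h_2 = \eta - S_1 h$ (well-definedness of this is exactly the content of the preceding lemma, since changing $h_2$ by a kernel vector of $S_2 - S_1$ doesn't change the form).

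With this formula in hand, the two inclusions become routine. For $C(\cS_1, \cS_2) \subseteq \{\Sg \ge 0\}$: if $v = (h, Sh)$ with $S_1 \le S \le S_2$, then $\eta - S_1 h = (S - S_1) h$, and I can take $h_2$ to be a solution of $(S_2 - S_1) h_2 = (S - S_1)h$ — such a solution exists because $\operatorname{range}(S - S_1) \subseteq \operatorname{range}(S_2 - S_1)$ when $0 \le S - S_1 \le S_2 - S_1$ (a standard fact: $A \le B$ for positive semidefinite $A, B$ implies $\ker B \subseteq \ker A$, hence $\operatorname{range} A \subseteq \operatorname{range} B$). Then $\Sg(v) = \langle (S_2 - S_1) h_2, h_2 \rangle \ge 0$. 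For the reverse inclusion $\{\Sg \ge 0\} \subseteq C(\cS_1, \cS_2)$: suppose $\Sg_{\cS_1,\cS_2}(v) \ge 0$ for $v = (h, \eta)$; in particular $v \in \cS_1 + \cS_2$, so $\eta - S_1 h = (S_2 - S_1) h_2$ for some $h_2$. I must produce a symmetric $S$ with $S_1 \le S \le S_2$ and $Sh = \eta$. The natural candidate is something like $S = S_1 + (S_2 - S_1) P$ where $P$ is an appropriate (orthogonal-with-respect-to-the-$(S_2-S_1)$-inner-product) projection sending $h \mapsto h_2$; one checks $Sh = S_1 h + (S_2 - S_1) h_2 = \eta$, and symmetry plus the sandwiching $S_1 \le S \le S_2$ follow from $0 \le P \le I$ in the appropriate sense. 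The nonnegativity $\Sg(v) \ge 0$, i.e. $\langle (S_2 - S_1) h_2, h_2 \rangle \ge 0$, is automatic here since $S_2 - S_1 \ge 0$, so this direction actually shows $\cS_1 + \cS_2 \subseteq C(\cS_1, \cS_2)$ directly and the sign condition is only needed to...

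Here is where I expect the genuine subtlety: when $S_2 - S_1$ is only positive semidefinite (not definite), $\cS_1 + \cS_2$ is a proper subspace of $\R^{2n}$, $\Sg$ takes the value $-\infty$ off it, and on it the quadratic form $\langle(S_2-S_1)h_2,h_2\rangle$ is a genuinely nonnegative form that can still vanish — so I need to double-check that the claim $\cS_1 + \cS_2 \subseteq C(\cS_1,\cS_2)$ and hence $\{\Sg \ge 0\} = \cS_1 + \cS_2$ is actually what's being asserted, and reconcile this with the sign characterization. The resolution is that the construction of $S$ above genuinely requires $h_2$ to be chosen in the range of $S_2 - S_1$ (the form-orthogonal complement of the kernel), which is possible precisely because $v \in \cS_1 + \cS_2$; so the main obstacle is bookkeeping the non-uniqueness/degenerate case carefully rather than any deep idea. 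I would organize the final proof as: (1) derive the quadratic-form expression for $\Sg_{\cS_1,\cS_2}$ on graphs, (2) record the linear-algebra lemma $0\le A\le B \Rightarrow \operatorname{range}A\subseteq\operatorname{range}B$, (3) prove $C(\cS_1,\cS_2)\subseteq\{\Sg\ge0\}$, (4) construct the interpolating matrix $S$ to prove the reverse inclusion, handling the degenerate kernel directions explicitly.
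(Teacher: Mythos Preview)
Your central computation is wrong, and this error propagates through the whole plan. With $v_1 = (h_1, S_1 h_1)$ and $v_2 = (h_2, S_2 h_2)$ one has
\[
\Sg_{\cS_1,\cS_2}(v) = \omega(v_1,v_2) = h_1^T S_2 h_2 - (S_1 h_1)^T h_2 = h_1^T (S_2 - S_1) h_2,
\]
a \emph{bilinear} pairing of $h_1$ and $h_2$, not the quadratic form $\langle (S_2-S_1)h_2, h_2\rangle$. There is no ``term that vanishes'': writing $h_1 = h - h_2$ gives $h^T(S_2-S_1)h_2 - h_2^T(S_2-S_1)h_2$, and the cross term is genuinely there. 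Your own puzzlement (``the sign condition is only needed to\dots'') is exactly the symptom of this mistake: with the correct formula, $\Sg(v)\ge 0$ is a nontrivial constraint, not automatic from $S_2 - S_1 \ge 0$. For instance take $U = S_2 - S_1 > 0$, $h = 0$, $h_2 \ne 0$; then $h_1 = -h_2$ and $\Sg(v) = -h_2^T U h_2 < 0$, matching the fact that $(0,Uh_2)$ is not of the form $(h,Sh)$.

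Once the formula is corrected, both inclusions require real arguments. For $C(\cS_1,\cS_2)\subset\{\Sg\ge 0\}$, in the transversal case $U>0$ one finds $h_1 = U^{-1}(S_2-S)h$, $h_2 = U^{-1}(S-S_1)h$, so $\Sg(v) = h^T(S_2-S)U^{-1}(S-S_1)h$, and one must prove $(S_2-S)U^{-1}(S-S_1)\ge 0$; the paper does this via a short symmetry/commutation trick. For the reverse inclusion your projection idea is on the right track and is essentially what the paper does: after passing to $U^{1/2}$-coordinates the condition $h_1^T U h_2 \ge 0$ becomes $y_1^T y_2 \ge 0$, and one constructs symmetric $W_1,W_2\ge 0$ with $W_1+W_2=\Id$ and $W_i(y_1+y_2)=y_i$. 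But note that this construction \emph{fails} without the sign hypothesis (the $h=0$ example above), so the role of $\Sg\ge 0$ is precisely to make your interpolating $S$ exist --- it is not a side condition to be explained away. The degenerate case $\ker(S_2-S_1)\ne 0$ is then handled in the paper by a symplectic block-diagonalization reducing to the transversal case; your sketch for this part is too vague to assess, but the main repair needed is upstream.
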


We prove this proposition in two steps. First we assume the subspaces $\cS_1, \cS_2$ are transversal. 

\subsection{The transversal case}

\begin{lem}\label{lem:matrix}
Suppose $y_1, y_2 \in \R^n$ satisfies $y_1^T y_2 \ge 0$, then there exists $W_1, W_2$ positive semi-definite symmetric matrices, such that 
\[
	y_1  = W_1 (y_1 + y_2), \quad y_2 = W_2 (y_1 + y_2), \quad W_1 + W_2 = \Id. 
\]
\end{lem}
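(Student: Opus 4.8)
The plan is to write down $W_1$ explicitly as a rank-one matrix and set $W_2 = \Id - W_1$. Put $y = y_1 + y_2$. First dispose of the degenerate case $y_1 = 0$: then take $W_1 = 0$ and $W_2 = \Id$, which works since $y_2 = y$. Otherwise, note that
\[
	y_1^T y = |y_1|^2 + y_1^T y_2 \ge |y_1|^2 > 0,
\]
so the scalar $c := y_1^T y$ is strictly positive (in particular $y \ne 0$), and I define
\[
	W_1 := \frac{1}{c}\, y_1 y_1^T, \qquad W_2 := \Id - W_1.
\]

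The algebraic requirements are then immediate: $W_1$ is symmetric and positive semi-definite by construction; $W_1 y = \tfrac{1}{c}\, y_1 (y_1^T y) = \tfrac{1}{c}\, y_1 c = y_1$; hence $W_2 y = y - y_1 = y_2$; and $W_1 + W_2 = \Id$ by definition. So the only thing left to check is that $W_2$ is positive semi-definite, i.e. that $W_1 \le \Id$.

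This last point is where the hypothesis $y_1^T y_2 \ge 0$ is used, and it is the crux of the argument. The matrix $W_1 = \tfrac1c y_1 y_1^T$ has rank one: its eigenvalues are $0$ on the orthogonal complement of $y_1$ and $\tfrac{|y_1|^2}{c} = \tfrac{|y_1|^2}{|y_1|^2 + y_1^T y_2}$ on $\R y_1$. The latter is $\le 1$ exactly because $y_1^T y_2 \ge 0$; hence $0 \le W_1 \le \Id$ and $W_2 = \Id - W_1 \ge 0$. I do not anticipate any real obstacle here — the whole content is locating the right closed form for $W_1$; one could instead reduce to the subspace $\mathrm{Span}(y_1, y_2)$ and solve a $2\times 2$ system for the missing entry, but the rank-one choice makes this detour unnecessary, and the eigenvalue bound $\tfrac{|y_1|^2}{|y_1|^2 + y_1^T y_2}\le 1$ being equivalent to the sign condition also shows the hypothesis is sharp.
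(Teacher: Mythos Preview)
Your proof is correct and is genuinely different from the paper's. The paper sets $y = y_1+y_2$, $z = y_1-y_2$, observes that $y_1^T y_2 \ge 0 \Leftrightarrow \|z\|\le\|y\|$, and then seeks a symmetric $W$ with $-\Id \le W \le \Id$ and $Wy = z$; it builds $W$ by rotating so that $y \parallel e_1$ and $z \in \Span\{e_1,e_2\}$, writing down a $2\times 2$ block, and conjugating back, finally taking $W_1 = \tfrac12(\Id+W)$, $W_2 = \tfrac12(\Id-W)$. Your rank-one formula $W_1 = c^{-1} y_1 y_1^T$ bypasses this reduction entirely and gives different matrices (the paper's $W_1$ equals $\tfrac12\Id$ off the two-dimensional block, so it has full rank in general). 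Your argument is shorter and more transparent --- the eigenvalue check $|y_1|^2/(|y_1|^2 + y_1^T y_2) \le 1$ makes the role of the hypothesis completely explicit --- while the paper's construction, being the $2\times 2$ reduction you allude to in your last sentence, has the minor advantage of treating $y_1$ and $y_2$ symmetrically.
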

\begin{proof}
	Let $y = y_1 + y_2$, and $z = y_1 - y_2$, then 
	\[
		y_1^T y_2 \ge 0 \quad \Leftrightarrow \quad \|z\| \le \|y\|. 
	\]
	We show the following: there exists a  symmetric matrix $W$ such that $- \Id \le W \le \Id$ and 
	\[
		z = W y. 
	\]
	
	First, by scaling both $z$ and $y$, it suffices to consider $\|y\|= 1$ and $\|z\|\le 1$.  Let $P$ be an orthogonal matrix such that $Py = e_1$, $P z \in \Span\{e_1, e_2\}$. Then $Pz = (z_1, z_2, 0, \cdots, 0)$ with $z_1^2 + z_2^2 \le 1$. Define
	\[
		W' = 
		\begin{bmatrix}
			\begin{matrix}
				z_1 & z_2 \\ z_2 & - z_1
			\end{matrix} & 0 \\
			0 & 0 
		\end{bmatrix}, 
	\]
	then $- \Id \le W' \le \Id$ and $W' e_1 = P z$. As a result, $W = P^T (W') P$ satisfies $-\Id \le W \le \Id$ and $W y = z$. 

	We now let 
	\[
		W_1 = \frac12(\Id + W), \quad W_2 = \frac12 (\Id - W), 
	\]
	then 
	\[
		W_1 y = (y + z)/2 = y_1, \quad W_2 y = (y-z)/2 = y_2. 
	\]
\end{proof}

\begin{lem}\label{lem:char-cone}
Suppose $\cS_1 < \cS_2$, then
\begin{equation}
  \label{eq:char-cone}
  	C(\cS_1, \cS_2) = \{v: \quad \Sg_{\cS_1, \cS_2}(v) \ge 0\}. 
\end{equation}
\end{lem}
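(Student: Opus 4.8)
The plan is to reduce everything to the matrix statement of Lemma \ref{lem:matrix} by working in coordinates adapted to the splitting $\R^{2n} = \cS_1 \oplus \cS_2$. Since $\cS_1 < \cS_2$ are transversal Lagrangian subspaces, write $\cS_i = \{(h, S_i h)\}$ with $S_1 < S_2$, and set $\Delta = S_2 - S_1 > 0$. A vector $v = (h, Sh) \in C(\cS_1, \cS_2)$, i.e.\ with $S_1 \le S \le S_2$, should be decomposed as $v = v_1 + v_2$ with $v_i \in \cS_i$; solving $v_1 = (a, S_1 a)$, $v_2 = (b, S_2 b)$, $a + b = h$, $S_1 a + S_2 b = Sh$ gives $b = \Delta^{-1}(S - S_1)h$ and $a = \Delta^{-1}(S_2 - S)h$. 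The task is then to compute $\Sg_{\cS_1,\cS_2}(v) = \omega(v_1, v_2)$ in these terms and check it is $\ge 0$ exactly when $S_1 \le S \le S_2$.

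First I would compute the symplectic pairing: with the standard form $\omega((a, \alpha),(b,\beta)) = \langle \alpha, b\rangle - \langle a, \beta\rangle$, one gets $\omega(v_1, v_2) = \langle S_1 a, b\rangle - \langle a, S_2 b\rangle = -\langle a, \Delta b\rangle$ using symmetry of $S_1, S_2$. Substituting $a = \Delta^{-1}(S_2 - S)h$ and $b = \Delta^{-1}(S - S_1)h$ yields
\[
  \Sg_{\cS_1,\cS_2}(v) = -\langle \Delta^{-1}(S_2 - S)h,\ (S - S_1)h\rangle.
\]
To see the sign, I would substitute $y_1 := \Delta^{1/2}$-type quantities: writing $h' = \Delta^{1/2} h$ (legitimate since $\Delta > 0$), set $y_1 = \Delta^{-1/2}(S - S_1)h$ and $y_2 = \Delta^{-1/2}(S_2 - S)h$, so that $y_1 + y_2 = \Delta^{1/2} h = h'$ and $\Sg_{\cS_1,\cS_2}(v) = -\langle y_2, y_1\rangle$... wait — the pairing is actually $\langle y_1, y_2 \rangle$ up to sign bookkeeping, which I will fix so that $\Sg \ge 0 \iff y_1^T y_2 \ge 0$. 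Thus $v \in C(\cS_1,\cS_2)$ with the symmetric-matrix description is, after this change of variables, exactly the hypothesis $y_1^T y_2 \ge 0$ of Lemma \ref{lem:matrix}, and conversely every such pair of vectors arises this way as $h'$ ranges over $\R^n$ (here one uses that $S - S_1 = \Delta^{1/2} W_1 \Delta^{1/2}$ and $S_2 - S = \Delta^{1/2} W_2 \Delta^{1/2}$ for the $W_i$ produced by the lemma, and that $0 \le W_i \le \Id$ forces $S_1 \le S \le S_2$).

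Concretely the two inclusions go as follows. For "$\subseteq$": given $v \in C(\cS_1,\cS_2)$, the computation above shows $\Sg_{\cS_1,\cS_2}(v) = y_1^T y_2$ with $y_i$ as defined, and $S_1 \le S \le S_2$ gives $\langle (S-S_1)h, \Delta^{-1}(S_2-S)h\rangle \ge 0$ after conjugating by $\Delta^{\pm 1/2}$, so $\Sg \ge 0$. For "$\supseteq$": if $\Sg_{\cS_1,\cS_2}(v) \ge 0$ then in particular $v \in \cS_1 + \cS_2 = \R^{2n}$, so $v = (h, k)$ for some $h, k$; I need $k = Sh$ for a symmetric $S$ with $S_1 \le S \le S_2$. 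Decompose $v = v_1 + v_2$ as above — this is forced and unique by transversality — set $y_1, y_2$ by the same formulas (now in terms of $h, k$), observe $y_1^T y_2 = \Sg(v) \ge 0$, apply Lemma \ref{lem:matrix} to get $W_1, W_2 \ge 0$ with $W_1 + W_2 = \Id$ and $y_i = W_i(y_1+y_2)$, and then define $S = S_1 + \Delta^{1/2} W_2 \Delta^{1/2}$ (symmetric, and between $S_1$ and $S_2$ since $0 \le W_2 \le \Id$); a short check confirms $Sh = k$. The main obstacle — really the only nonroutine point — is getting the bookkeeping of the change of variables $h \mapsto \Delta^{1/2} h$ right so that the quadratic form $-\langle \Delta^{-1}(S_2-S)h, (S-S_1)h\rangle$ turns into the clean bilinear pairing $y_1^T y_2$ with $y_1 + y_2$ unconstrained; once that identification is in place, Lemma \ref{lem:matrix} does exactly the work needed and both directions follow. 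I would also note that transversality ($S_1 < S_2$, hence $\Delta$ invertible) is used twice: to make the decomposition $v = v_1 + v_2$ unique and to form $\Delta^{\pm 1/2}$; this is why the degenerate case $\cS_1 \le \cS_2$ must be handled separately afterwards, presumably by a limiting argument.
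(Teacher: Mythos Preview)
Your approach is essentially the same as the paper's: decompose $v$ in $\cS_1 \oplus \cS_2$, compute $\Sg(v)$ as the pairing $x_1^T U x_2$ (your $a^T \Delta b$), pass to $y_i = U^{1/2} x_i$, and for the reverse inclusion invoke Lemma~\ref{lem:matrix}, while the forward inclusion rests on the observation that $U^{-1/2}(S-S_1)U^{-1/2}$ and $U^{-1/2}(S_2-S)U^{-1/2}$ sum to $\Id$ and hence commute (this is the ``conjugating by $\Delta^{\pm 1/2}$'' step you allude to). Your sign convention for $\omega$ is opposite to the paper's, which is why you hit the ``wait''; once that is corrected the two arguments match line by line.
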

\begin{proof}
	\emph{Part 1.}
	Let us first show $v = (x, S x)$ with $S_1 \le S \le S_2$ implies $\Sg(v) \ge 0$.  Then $v = v_1 + v_2$, $v_1 = (x_1, S_1 x_1) \in \cS_1$, $v_2 = (x_2, S_2 x_2) \in \cS_2$ if and only if 
	\[
		\bmat{\Id & \Id \\ S_1 & S_2} \bmat{x_1 \\ x_2} = \bmat{x \\ S x}. 
	\]
	Denote $U = (S_2 - S_1)$, we have 
	\begin{equation}
	  \label{eq:decomp}
	  		\bmat{x_1 \\ x_2} = \bmat{U^{-1} & 0 \\ 0 & U^{-1}} \bmat{S_2 & - \Id \\ - S_1 & \Id} \bmat{x \\ S x} = 
		\bmat{ U^{-1}(S_2 - S) x \\ U^{-1} (S - S_1)x }. 
	\end{equation}
	Noting that 
	\[
		\Sg(v) = \omega( (x_1, S_1 x_1), (x_2, S_2 x_2) ) = x_1^T S_2 x_2 - x_1^T S_1^T x_2 =  x_1^T U x_2, 
	\]
	we obtain 
	\[
		\Sg(v) = x^T (S_2 - S) U^{-1} (S - S_1) x.
	\]
	It suffices to show that $(S_2 - S) U^{-1} (S - S_1)$ is positive semi-definite. Denote $U_2 = S_2 - S$ and $U_1 = S - S_1$, then both $U_1, U_2$ are positive semi-definite, and $U_1 + U_2 = U$. We note that 
	\[
		U_2 U^{-1} U_1 = (U - U_1)U^{-1}(U - U_2) = U - U_1 - U_2 + U_1 U^{-1} U_2 = U_1 U^{-1} U_2,
	\]
	therefore $U_2 U^{-1} U_1$ is symmetric. Furthermore, since 
	\[
		U^{-\frac12} \left( U_2 U^{-1} U_1 \right) U^{-\frac12} = (U^{-\frac12} U_2 U^{-\frac12})(U^{-\frac12} U_1 U^{-\frac12})
	\]
	is the product of two commuting positive semi-definite symmetric matrices, it is itself positive semi-definite. 

	\emph{Part 2}. 
	For the converse, let $\Sg(v) \ge 0$. Define 
	\begin{equation}
	  \label{eq:decomp2}
	  		\bmat{x_1 \\ x_2} = \bmat{U^{-1} & 0  \\ 0 & U^{-1}} \bmat{S_2 & - \Id \\ - S_1 & \Id} v,
	\end{equation}
	then $v = (x_1 + x_2, S_1 x_1 + S_2 x_2)$, and 
	\[
		\Sg(v) = x_1^T U x_2. 
	\]
	Let $y_i = U^{\frac12} x_i$ for $i = 1, 2$, then $\Sg(v) \ge 0$ implies $y_1^T  y_2 \ge 0$. 

	Use Lemma~\ref{lem:matrix}, we get $W_1, W_2 \ge 0$, $W_1 + W_2 = \Id$, such that 
	\[
		W_2 (y_1 + y_2) = y_1, \quad W_1 (y_1 + y_2) = y_2. 
	\]
	Since $y_i = U^{\frac12} x_i$, denote
	\[
		U_1 = U^{\frac12} W_1 U^{\frac12}, \quad U_2 = U^{\frac12} W_2 U^{\frac12}, 
	\]
	we get 
	\[
		U_2 (x_1 + x_2) =  U x_1, \quad U_1 (x_1 + x_2) = U x_2, \quad U_1, U_2 \ge 0, \quad U_1 + U_2 = U. 
	\]
	Write $x = x_1 + x_2$, we have 
	\[
\begin{aligned}
 		v & = (x_1 + x_2, S_1 x_1 + S_2 x_2) = (x_1 + x_2, S_1 (x_1 + x_2) + U x_2) \\
& = (x, S_1 x + U_1 x) = (x, (S_1 + U_1)x). 
 \end{aligned}
	\]
	Therefore  $v = (x, S x)$ where $S = U_1 + S_1 = S_2 - U_2$ satisfies $S_1 \le S \le S_2$. 
\end{proof}

\subsection{The general case}

We reduce the general case to the transversal case by using a coordinate change. Let $C \in \R$ and $A$ an invertible $n \times n$ matrix, consider the following linear symplectic maps $\Phi_C, \Phi_A: \R^{2n} \to \R^{2n }$
\[
	\Phi_C(x,y) = (x, y + C x),  \quad \Phi_A(x, y) = \left(  A^{-1} x, A^T y \right). 
\]
Let $S$ be a symmetric matrix and $\cS$ the associated Lagrangian subspace. Denote $\cS^A$ the Lagrangian subspace defined by the symmetric matrix $A^T S A$. 
\begin{lem}
\label{lem:symp-inv} Let $S_1 \le S_2$ be symmetric matrices, $C \in \R$ and $A$ an invertible matrix.
\begin{enumerate}
 \item  The equality \eqref{eq:char-cone} holds for $S_1, S_2$ if and only if the same holds for $S_1 + C \Id, S_2 + C \Id$. 
 \item  The equality \eqref{eq:char-cone} holds for  $S_1, S_2$ if and only if the same  holds for $A^T S_1 A \le A^T S_2 A$. 
\end{enumerate}
\end{lem}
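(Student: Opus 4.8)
The plan is to exploit the fact that \emph{both} sides of \eqref{eq:char-cone} are equivariant under linear symplectomorphisms of $\R^{2n}$, and that $\Phi_C$ and $\Phi_A$ realize, on the level of symmetric matrices, precisely the two operations in the statement. So first I would record the equivariance of $\Sg$: if $\Phi\colon\R^{2n}\to\R^{2n}$ is any linear symplectomorphism and $\cL_1,\cL_2$ are Lagrangian, then $\Phi\cL_1,\Phi\cL_2$ are Lagrangian, $\Phi(\cL_1+\cL_2)=\Phi\cL_1+\Phi\cL_2$, and for any decomposition $v=v_1+v_2$ with $v_i\in\cL_i$ we have $\Phi v=\Phi v_1+\Phi v_2$ with $\Phi v_i\in\Phi\cL_i$ and $\omega(\Phi v_1,\Phi v_2)=\omega(v_1,v_2)$; since $\Phi$ is a linear isomorphism it also preserves the property $v\in\cL_1+\cL_2$, so $\Sg_{\Phi\cL_1,\Phi\cL_2}(\Phi v)=\Sg_{\cL_1,\cL_2}(v)$ for all $v$, whence $\{v:\Sg_{\Phi\cL_1,\Phi\cL_2}(v)\ge0\}=\Phi\bigl(\{v:\Sg_{\cL_1,\cL_2}(v)\ge0\}\bigr)$.

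Next I would treat the cone side. A one-line computation gives $\omega(\Phi_Cu,\Phi_Cu')=\omega(u,u')$ and $\omega(\Phi_Au,\Phi_Au')=\omega(u,u')$, so both maps are linear symplectomorphisms. Moreover $\Phi_C$ sends the graph of $S$ to the graph of $S+C\Id$, while $\Phi_A(x,Sx)=(A^{-1}x,(A^TSA)(A^{-1}x))$ sends the graph of $S$ to the graph of $A^TSA$, i.e. to $\cS^A$. Since $S\mapsto S+C\Id$ trivially preserves the order of symmetric matrices, and $S\mapsto A^TSA$ does as well (for invertible $A$ one has $M\ge0\Leftrightarrow A^TMA\ge0$, because $A$ is surjective), it follows that $\Phi_C$ maps $C(\cS_1,\cS_2)$ bijectively onto $C(\Phi_C\cS_1,\Phi_C\cS_2)$, the cone between the graphs of $S_1+C\Id$ and $S_2+C\Id$, and that $\Phi_A$ maps $C(\cS_1,\cS_2)$ bijectively onto $C(\Phi_A\cS_1,\Phi_A\cS_2)=C(\cS_1^A,\cS_2^A)$.

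Finally I would combine the two observations. Applying the bijection $\Phi\in\{\Phi_C,\Phi_A\}$ to the set equality \eqref{eq:char-cone} for $(S_1,S_2)$ and using the two transformation rules above turns the left-hand side into the cone for the transformed pair and the right-hand side into $\{v:\Sg\ge0\}$ for the transformed pair, so \eqref{eq:char-cone} holds for the transformed pair; running the same argument with $\Phi^{-1}$ gives the converse, proving both (1) and (2). I do not expect a genuine obstacle here: the whole content is the two symplecticity checks together with the order-invariance of congruence, and the only things requiring care are bookkeeping — tracking which graph maps to which (note in particular that $A$ is inverted on the base variable, so the base point of $\Phi_A(x,Sx)$ is $A^{-1}x$, not $Ax$), and the harmless fact that the value $-\infty$ is transported along with everything else.
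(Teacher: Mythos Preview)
Your proposal is correct and follows essentially the same approach as the paper: one checks that the linear symplectomorphisms $\Phi_C$ and $\Phi_A$ transport both the cone $C(\cS_1,\cS_2)$ and the set $\{v:\Sg_{\cS_1,\cS_2}(v)\ge 0\}$ to the corresponding objects for the transformed pair, using that $\Phi_C,\Phi_A$ send the graph of $S$ to the graphs of $S+C\Id$ and $A^TSA$ respectively and that these operations preserve the semidefinite order. Your write-up is in fact slightly more explicit than the paper's in spelling out the equivariance of $\Sg$ (including the $-\infty$ case) and the order-preservation of congruence.
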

\begin{proof}
Since the symplectic form $\omega$ is invariant under symplectic maps, for any linear symplectic map $\Phi$ and Lagrangian subspaces $\cL_1, \cL_2$, 
\[
	\Phi\left( \{v: \, \Sg_{\cL_1, \cL_2}\}(v) \ge 0\} \right) = \{v: \, \Sg_{\Phi \cL_1, \Phi \cL_2}(v) \ge 0 \}. 
\]

For (1), let us denote by $\cS_1', \cS_2'$ the subspaces for $S_1 + C \Id, S_2 + C \Id$. Then we clearly have 
\[
	\Phi_C \cS_i = \cS_i',  \, i = 1, 2, \quad \Phi_C\left(  C(\cS_1, \cS_2)  \right)= C(\cS_1', \cS_2'),
\]
(1) follows. 

For (2), let us denote by $\cS_i^A$ the subspaces of $A^T S_i A$, $i = 1,2$. Then 
\[
	\Phi_A \cS_i = \{(A^{-1} x, A^T S_ix): \,  x\in \R^n\} = \{ (A^{-1} x, A^T S_i A A^{-1} x): \, x \in \R^n \} = \cS^A_i.  
\]
On the other hand, since $S_1 \le S_2$ if and only if $A^T S_1 A \le A^T S_2 A$, we have
\[
\begin{aligned}
    	& \Phi_A \left( C(\cS_1, \cS_2) \right) = \Phi_A \{ (x, Sx): \, x \in \R^n, \, S_1 \le S \le S_2 \}  \\
    	& \quad = \{ (A^{-1} x, A^T S A A ^{-1} x):\,  x \in \R^n, \, S_1 \le S \le S_2\}     = C(\cS_1^A, \cS_2^A). 
\end{aligned}
\]
Item (2) follows. 
\end{proof}

\begin{proof}
[Proof of Proposition~\ref{prop:cone-sign}]
It suffices to consider the case when $\cS_1, \cS_2$ are not transversal. Moreover, by applying the symplectic coordinate change $\Phi_C$ for $C$ sufficiently large, we may assume that $S_1, S_2$ are both invertible. 

Assume that $S_1 \le S_2$ and $\dim \ker (S_2 - S_1) = n-m > 0$. Let $P$ be an orthogonal matrix which maps $\ker(S_2 - S_1)$ to the subspace $\{0\} \times \R^{n - m}$ and maps $\left( \ker (S_2 - S_1) \right)^\perp$ to $\R^m \times \{0\}$. Since $\ker P(S_2 - S_1) P^T = \{0\} \times \R^{n-m}$, in block form we have
\[
	PS_1 P^T = \bmat{\tilde{S}_1 & M \\ M^T & N }, \quad P S_2 P^T = \bmat{\tilde{S}_2 & M  \\ M^T & N},  \quad
	\det(\tilde{S}_2- \tilde{S}_1) \ne 0, \quad \det N \ne 0. 
\]
Consider the matrix 
\[
	Q = \bmat{ I & 0 \\ - N^{-1} M^T & I}, 
\]
then for $i = 1,2$, 
\[
\begin{aligned}
  & 	Q^T P S_i P^T Q \\
  & \quad = 	\bmat{I & - M N^{-1}  \\ 0 & I} \bmat{\tilde{S}_i & M \\ M^T & N } \bmat{I & 0 \\ - N^{-1} M^T & I}
	=  \bmat{\tilde{S}_i - M N^{-1} M^T & 0 \\ 0 & N}. 
\end{aligned}
\]
Therefore, by considering the coordinate change $\Phi_A$ where $A = P^T Q$, we reduce to the special case
\[
	S_1 = \bmat{\bar{S}_ 1& 0 \\ 0 & N}, S_2 = \bmat{\bar{S}_2 & 0 \\ 0 & N}, 
\]
where $\bar{S}_1 <  \bar{S}_2$. In this special case
\[
	C(\cS_1, \cS_2) = \left\{ \left( \bmat{\bar{x} \\ \bar{y}}, \bmat{\bar{S} \bar{x} \\ N \bar{y}} \right): \quad \bar{S}_1 \le \bar{S} \le \bar{S}_2, \, \bar{x} \in \R^m, \, \bar{y} \in \R^{n-m}  \right\}.  
\]
On the other hand, every $v \in \cS_1 + \cS_2$ can be expressed as 
\[
\begin{aligned}
   	v &= \left( \bmat{\bar{x}_1 + \bar{x}_2 \\ \bar{y}_1 + \bar{y}_2} , 
	\bmat{\bar{S}_1 \bar{x}_1 + \bar{S}_2 \bar{y}_2 \\ N\bar{y}_1 + N \bar{y}_2} \right) \\
	& = w_1 + w_2 := \left( \bmat{\bar{x}_1 + \bar{x}_2 \\ 0} , \bmat{\bar{S}_1 \bar{x}_1 + \bar{S}_2 \bar{y}_2 \\ 0} \right) + 
	\left( \bmat{0 \\ \bar{y}}, \bmat{0 \\ N \bar{y}} \right), 
\end{aligned}
\]
where $\bar{y} = \bar{y}_1 + \bar{y}_2$. Since $w_2\in \cS_1 \cap \cS_2$, $\Sg_{\cS_1, \cS_2}(v) = \Sg_{\cS_1, \cS_2}(w_1)$. Our proposition now follows from applying Lemma~\ref{lem:char-cone} to the reduced matrices $\bar{S}_1, \bar{S}_2$. 
\end{proof}

\section{Generalized semi-concavity and tangent cones}
\label{sec:semi-concave}

Let $\Omega \subset \R^n$ be an open convex set. 
A function $f: \Omega \to \R$ is called $C$-semi-concave if for each $x \in \Omega$, there is $l_x \in \R^n$ such that 
\[
	f(y) - f(x) -  l_x \cdot (y - x) \le \frac12 C \|y - x\|^2, \quad x, y \in \Omega. 
\]
$l_x$ is called a super-gradient at $x$.
$f$ is called $C$-semi-convex if $-f$ is $C$-semi-concave, and $l_x$ is called a sub-gradient.   It is well known if a function is both semi-concave and semi-convex, then $f$ is differentiable, and $df$ is locally Lipschitz. In this section we outline a generalized version of this lemma. 

Let $A$ be a symmetric  $n\times n$ matrix. We say that $f: \Omega \to \R$ is $A$-semi-concave if for each $x \in \R^n$, there is $l_x \in \R^n$ such that 
\[
	f(y) - f(x) -  l_x \cdot (y - x) \le \frac12 A (y-x)^2, \quad x, y \in \Omega
\]
where $Ax^2$ denotes $Ax \cdot x$. $A$-semi-concave functions are $\|A\|$-semi-concave.  We say $f$ is $A$-semi-convex if $-f$ is $A$-semi-concave. The following lemma is proven by direct computation.
\begin{lem}
	$f$ is $A$-semi-concave if and only if $f_A(x) = f(x) - \frac12 Ax^2$ is concave. 
\end{lem}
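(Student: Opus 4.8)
The plan is to complete the square in the defining inequality of $A$-semi-concavity and recognize the resulting inequality as the statement that $f_A$ admits a supporting affine majorant at every point, and then to invoke the classical fact that a function on an open convex set is concave precisely when it has a super-gradient everywhere.

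First I would expand the quadratic term using the symmetry of $A$, writing $A(y-x)^2 = Ay\cdot y - 2Ax\cdot y + Ax\cdot x$. Substituting this into
\[
	f(y) - f(x) - l_x\cdot(y-x) \le \tfrac12 A(y-x)^2, \qquad x,y \in \Omega,
\]
and moving $\tfrac12 Ay\cdot y$ to the left-hand side and reorganizing the remaining quadratic terms on the right, the inequality becomes, after using $f(x) + \tfrac12 Ax\cdot x = f_A(x) + Ax\cdot x$,
\[
	f_A(y) \le f_A(x) + (l_x - Ax)\cdot(y-x), \qquad x,y \in \Omega.
\]
Hence $f$ is $A$-semi-concave with super-gradient field $x \mapsto l_x$ if and only if, for every $x \in \Omega$, the affine function $y \mapsto f_A(x) + (l_x - Ax)\cdot(y-x)$ majorizes $f_A$ and is tangent to it at $x$; equivalently, $m_x := l_x - Ax$ is a super-gradient of $f_A$ at $x$. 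This step is purely algebraic bookkeeping.

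Then I would close the argument with the standard characterization of concavity: a function on an open convex set $\Omega$ is concave if and only if it possesses a super-gradient at every point. The ``if'' direction is elementary — given $x_0, x_1 \in \Omega$ and $t \in (0,1)$, apply the super-gradient inequality at $x_t = (1-t)x_0 + tx_1$ to the two points $x_0$ and $x_1$ and form the convex combination of the two resulting inequalities. The ``only if'' direction is the supporting hyperplane theorem applied to the hypograph of $f_A$, a convex subset of $\R^{n+1}$ with nonempty interior. Combining this dictionary with the equivalence established in the previous paragraph yields the lemma.

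I expect no genuine obstacle here — the statement is flagged in the text as following by direct computation. The only points demanding a little care are the correct use of the symmetry of $A$ when expanding $A(y-x)^2$, the bookkeeping that converts the super-gradient $l_x$ of $f$ into the super-gradient $l_x - Ax$ of $f_A$, and the observation that the super-gradient of $f_A$ is permitted to depend on the base point, which is exactly the pointwise form in which $A$-semi-concavity is defined.
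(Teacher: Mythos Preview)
Your proof is correct and is exactly the direct computation the paper alludes to (the paper omits the proof entirely, noting only that it follows by direct computation). Your algebraic reduction to $f_A(y) \le f_A(x) + (l_x - Ax)\cdot(y-x)$ and the appeal to the standard equivalence between concavity on an open convex set and existence of super-gradients everywhere are precisely what is intended.
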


The proof of our  next lemma follows Proposition 13.33 in \cite{RW2009}. 
\begin{lem}\label{lem:aniso-lipshitz}
Suppose $f: \R^n \to \R$ is $B$-semi-concave and $g: \R^n \to \R$ is $(-A)$-semi-convex, and $U = B - A$ is positive definite. Suppose $f(x)\ge g(x)$ and $K$ is the set on which $f - g$ reaches its minimum. 

Then for all $x_1, x_2 \in K$, we have 
\[
	\|df(x_2) - df(x_1) - \frac12(A + B)(x_2 - x_1)\|_{U^{-1}} \le \frac12 \|x_2 - x_1\|_U, 
\]
where $\|x\|_U = \sqrt{U x^2}$. 

The same conclusion holds, if $f, g$ are only defined on $\Omega$, and we assume in addition that $K = \argmin(f-g)$ is compactly contained in $\Omega$. 
\end{lem}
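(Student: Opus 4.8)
\emph{Proof plan.} The strategy is to adapt the classical argument that a function which is simultaneously semiconcave and semiconvex is $C^{1,1}$ (Proposition 13.33 of \cite{RW2009}), but to run the comparison symmetrically in the two base points $x_1,x_2$ and to carry the matrices $A,B$ themselves rather than their norms. Two reductions come first. Replacing $f$ by $f-\min_\Omega(f-g)$ affects neither the $B$-semiconcavity of $f$, nor $f\ge g$, nor $K$, nor any supergradient of $f$, so I may assume $\min(f-g)=0$, that is, $f=g$ on $K$. Second, at $x_0\in K$, if $l$ is a supergradient of $f$ and $m$ a subgradient of $g$ at $x_0$, then, since $x_0$ minimizes $f-g$, the semiconvexity bound for $g$, the inequality $g(y)-g(x_0)\le f(y)-f(x_0)$, and the semiconcavity bound for $f$ chain to
\[
 m\cdot(y-x_0)+\tfrac12 A(y-x_0)^2\le f(y)-f(x_0)\le l\cdot(y-x_0)+\tfrac12 B(y-x_0)^2;
\]
putting $y=x_0+tv$ and letting $t\to 0^+$ forces $m=l=:p_0$, and the squeeze then shows that $f$ and $g$ are differentiable at $x_0$ with $df(x_0)=dg(x_0)=p_0$. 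Thus $p_0$ is at once a supergradient of $f$ and a subgradient of $g$ at $x_0$.

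Now fix $x_1,x_2\in K$ and set $p_i=df(x_i)$. Since $p_1$ is a subgradient of $g$ at $x_1$, $p_2$ a supergradient of $f$ at $x_2$, $g\le f$, and $g(x_i)=f(x_i)$, one has for all $y$
\[
 g(x_1)+p_1\cdot(y-x_1)+\tfrac12 A(y-x_1)^2\le g(y)\le f(y)\le f(x_2)+p_2\cdot(y-x_2)+\tfrac12 B(y-x_2)^2.
\]
The difference of the two extreme expressions is a quadratic $\Psi(y)=\tfrac12\,Uy\cdot y+a\cdot y+e$ with $U=B-A>0$, so $\Psi\ge 0$ everywhere gives $e-\tfrac12\,U^{-1}a\cdot a=\min\Psi\ge 0$. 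Interchanging $x_1\leftrightarrow x_2$ produces a second quadratic $\Psi'(y)=\tfrac12\,Uy\cdot y+a'\cdot y+e'$, again with $e'-\tfrac12\,U^{-1}a'\cdot a'\ge 0$.

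Adding the two scalar inequalities, the constants combine (using $f=g$ on $K$) to $e+e'=\tfrac12(\|x_1\|_U^2+\|x_2\|_U^2)$, while a direct computation of the linear coefficients yields
\[
 a+a'=-U(x_1+x_2),\qquad a-a'=2\delta,\qquad \delta:=(p_2-p_1)-\tfrac12(A+B)(x_2-x_1),
\]
so $\delta$ is precisely the vector to be estimated. The parallelogram law in the inner product $\langle u,v\rangle=U^{-1}u\cdot v$ rewrites $U^{-1}a\cdot a+U^{-1}a'\cdot a'$ as $\tfrac12\|x_1+x_2\|_U^2+2\|\delta\|_{U^{-1}}^2$, and a second application of the parallelogram law to $\|x_1\|_U^2+\|x_2\|_U^2$ makes everything collapse to $\|\delta\|_{U^{-1}}^2\le\tfrac14\|x_1-x_2\|_U^2$, which is the assertion. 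The bookkeeping can be shortened by first subtracting $\tfrac14(A+B)x^2$ from both $f$ and $g$, reducing to $A=-B=\tfrac12 U$, and then conjugating by $x\mapsto U^{1/2}x$ to reach $U=\Id$, whereupon $\delta=p_2-p_1$ and one need only show $\|df(x_2)-df(x_1)\|\le\tfrac12\|x_1-x_2\|$.

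When $f,g$ are defined only on the convex open set $\Omega$, the displayed inequalities hold only for $y\in\Omega$, so one must check that the vertices $y^\ast=-U^{-1}a$ and $y^{\ast\prime}=-U^{-1}a'$ of $\Psi,\Psi'$ — which compute to $\tfrac12(x_1+x_2)\mp U^{-1}\delta$ — lie in $\Omega$; this is where the hypothesis that $K$ is compactly contained in $\Omega$ enters, together with the crude Lipschitz bound on $p_2-p_1$ coming from the isotropic fact that $f,g$, being also $\max(\|A\|,\|B\|)$-semiconcave and -semiconvex, have locally Lipschitz differentials, after which the general statement follows by localization. I expect this domain bookkeeping, not the algebra, to be the only delicate point: the conceptual step is recognizing that comparing the two one-sided parabolas at \emph{both} base points and minimizing is what controls the entire vector $p_2-p_1$, and not merely its component along $x_2-x_1$.
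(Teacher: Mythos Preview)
Your global argument is correct and takes a genuinely different route from the paper's. The paper works on the dual side: it sets $g_A(x)=g(x)-\tfrac12 Ax^2$, which is convex, and exploits the Fenchel conjugate $g_A^*$. Using $g_A\le f_A$ and the $U$-semiconcavity of $f_A$ at $x_1$, it obtains
\[
 g_A^*(p_2^A)\ \ge\ g_A^*(p_1^A)+(p_2^A-p_1^A)\cdot x_1+\tfrac12\,U^{-1}(p_2^A-p_1^A)^2,
\]
adds the swapped inequality, and completes the square to reach the conclusion. You instead stay in the primal picture: you sandwich $g\le f$ between a lower parabola at $x_1$ and an upper parabola at $x_2$, read off the discriminant condition $e\ge\tfrac12\|a\|_{U^{-1}}^2$ from nonnegativity of the resulting quadratic, symmetrize, and finish with two applications of the parallelogram law. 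The two arguments are morally dual---your ``$\min\Psi\ge 0$'' is the Legendre transform of a quadratic in disguise---but yours is self-contained and avoids any appeal to convex duality, which is a nice gain in elementarity. The paper's approach, on the other hand, makes the final ``complete the square'' step slightly shorter since it lands directly on $U^{-1}(p_2^A-p_1^A)^2\le(p_2^A-p_1^A)\cdot(x_2-x_1)$.

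For the local case you take a different tack from the paper, which simply extends $f,g$ to all of $\R^n$ while preserving the semiconcavity moduli and keeping $f-g>0$ outside $\Omega$, thereby reducing to the global case. Your idea of locating the vertices $y^\ast=\tfrac12(x_1+x_2)\mp U^{-1}\delta$ and invoking the isotropic Lipschitz bound to force them into $\Omega$ is sound in spirit, but as written your ``localization'' step is incomplete: the a priori bound $\|U^{-1}\delta\|\le C\|x_1-x_2\|$ only places $y^\ast$ in $\Omega$ when $\|x_1-x_2\|$ is small relative to $\mathrm{dist}(K,\partial\Omega)$, and the desired inequality does not obviously bootstrap from nearby pairs to arbitrary pairs in $K$. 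The cleanest fix is the paper's extension trick; alternatively you can note that in the application (a small neighbourhood of a point) $K$ can always be taken with diameter as small as needed.
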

\begin{proof}
	First of all, by adding a constant to $g$, we may assume that $\min(f - g) = 0$. Then by standard estimates of semi-concave functions, we have $f(x) = g(x)$ and $df(x) = dg(x)$ on $K$. 

	Since $g$ is $-A$-semi-convex,  $g_{A}(x) = g(x) - \frac12 A x^2$ is convex, and $dg_A(x) = dg(x) - Ax$ where $g$ is differentiable. Since $g_A$ is convex, convex duality (see for example \cite{RW2009}, chapter 11) implies if $p_1^A$ is any sub-gradient of $g_A$ at $x_1$, we have 
	\[
		g^*_A(p_1^A) := \sup_x \left\{ p_1^A \cdot x - g_A(x) \right\} = p_1^A \cdot x_1 - g(x_1). 
	\]
	We note that $g_A(x) \le f_A(x)$, with equality holding on $K$.  $f_A(x)$ is $U$-semi-concave. 

	Let $x_1, x_2 \in K$, then $f(x_i) = g(x_i)$ and $p_i = df(x_i) = dg(x_i)$, $p_i^A = p_i - A x_i$, $i = 1, 2$,  then 
	\[
		\begin{aligned}
			 g_A^*(p_2^A) &= \sup_x \{ p_2^A\cdot x  - g_A(x)\} \ge \sup_x \{ p_2^A\cdot x  - f_A(x)\}  \\
			& \ge \sup_x \{  p_2^A \cdot x - f_A(x_1) -  p_1^A \cdot  (x - x_1) -  \frac12 U(x - x_1)^2  \} \\
			& = p_1^A\cdot x_1 - g_A(x_1) + \sup_x \{ (p_2^A - p_1^A) \cdot x - \frac12 U(x - x_1)^2  \} \\
			& = g_A^*(p_1^A) + (p_2^A - p_1^A) \cdot x_1 + \frac12 U^{-1}(p_2^A - p_1^A)^2, 
		\end{aligned}
	\]
	where in the last equality, we used the fact that $\sup_x \{p \cdot x - \frac12 U x^2\} = \frac12 U^{-1} p^2$. Switch $x_1$ and $x_2$, we obtain 
	\[
		g_A^*(p_1^A) \ge g_A^*(p_2^A )+ (p_1^A - p_2^A) \cdot x_2 + \frac12 U^{-1} (p_2^A - p_1^A)^2. 
	\]
	Sum the two inequalities obtained, we have 
	\[
		U^{-1}(p_2^A - p_1^A)^2 \le (p_2^A - p_1^A) \cdot (x_2 - x_1) =  U^{-1}(p_2^A - p_1^A) \cdot U(x_2 - x_1). 
	\]
	Complete squares, we get 
	\[
		U^{-1}\left( p_2^A - p_1^A - \frac12 U(x_2 - x_1) \right)^2 \le \frac14 U(x_2 - x_1)^2,
	\]
	and the left hand side is equal to $U^{-1}\left( p_2 - p_1 - \frac12 (A + B)(x_2 - x_1) \right)^2$.

	For the local version, we only need to extend both $f, g$ to  $\R^n$ keeping the same semi-concavity, and that on $f - g > 0$ on $\R^n \setminus \Omega$. 
\end{proof}

We obtain the following standard lemma due to Fathi (see \cite{Fat2011}, \cite{Arn2012}) as a corollary. 
\begin{cor}\label{cor:graph-theorem}
If $f, -g$ are $C$-semi-concave, and $f \ge g$, $K = \argmin(f-g)$, then there is $C'> 0$ such that 
\[
	\|df(x_2) - df(x_1)\| \le C' \|x_2 - x_1\|, \quad x_1, x_2 \in K. 
\]
\end{cor}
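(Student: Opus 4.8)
The plan is to obtain this as an immediate specialization of Lemma~\ref{lem:aniso-lipshitz}, taking the matrices $A,B$ to be scalar multiples of the identity. First I would observe that a $C$-semi-concave function is automatically $C'$-semi-concave for every $C' \ge C$, so we may freely assume $C>0$ (if the given constant is $0$, replace it by any positive number; this only weakens the hypothesis). With $C>0$ fixed, set $B = C\Id$ and $A = -C\Id$. Since $f$ is $C$-semi-concave it is $B$-semi-concave, and since $-g$ is $C$-semi-concave — i.e. $(C\Id)$-semi-concave — the function $g$ is $(-A)$-semi-convex. Moreover $U := B - A = 2C\Id$ is positive definite, and we are given $f\ge g$ with $K = \argmin(f-g)$, so all hypotheses of Lemma~\ref{lem:aniso-lipshitz} are satisfied (using the local version of that lemma if $f,g$ are only defined on $\Omega$, which applies because $K$ is compactly contained in $\Omega$).

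Next I would simply read off the conclusion of Lemma~\ref{lem:aniso-lipshitz}: for all $x_1, x_2 \in K$,
\[
	\Big\| df(x_2) - df(x_1) - \tfrac12 (A+B)(x_2 - x_1) \Big\|_{U^{-1}} \le \tfrac12 \|x_2 - x_1\|_U .
\]
With the above choices $A + B = -C\Id + C\Id = 0$, so the correction term disappears and this reduces to $\|df(x_2) - df(x_1)\|_{U^{-1}} \le \tfrac12 \|x_2 - x_1\|_U$. Finally, since $U = 2C\Id$ we have $\|v\|_U = \sqrt{2C}\,\|v\|$ and $\|v\|_{U^{-1}} = (2C)^{-1/2}\|v\|$, so multiplying through by $\sqrt{2C}$ turns the last inequality into $\|df(x_2) - df(x_1)\| \le C\,\|x_2 - x_1\|$; thus $C' = C$ (or the enlarged constant) works.

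There is essentially no genuine obstacle here — the entire content sits inside Lemma~\ref{lem:aniso-lipshitz}. The only points requiring a little care are the bookkeeping between the semi-concave and semi-convex hypotheses (so that one correctly identifies $B = C\Id$ for $f$ and $A = -C\Id$ for $g$), the harmless reduction to $C>0$ needed so that $U = B-A$ is positive definite, and the conversion between the $U$- and $U^{-1}$-weighted norms and the Euclidean norm. I would keep the write-up to a few lines.
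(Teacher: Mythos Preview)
Your proposal is correct and is exactly the intended derivation: the paper presents this statement as an immediate corollary of Lemma~\ref{lem:aniso-lipshitz} with no separate proof, and specializing to $B = C\Id$, $A = -C\Id$ (so $A+B=0$, $U = 2C\Id$) is precisely how one reads it off. The bookkeeping you carry out, including the reduction to $C>0$ and the norm conversion yielding $C' = C$, is all fine.
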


Under the same assumptions as Lemma~\ref{lem:aniso-lipshitz}, define 
\begin{equation}
	\label{eq:cI}
	\cI_{f, g} = \argmin (f - g), \quad \tcI_{f, g} = \{(x, df(x)): x \in \cI_{f, g}\}, 
\end{equation}
and write $\cS_A = \{(h, Ah)\}$, $\cS_B = \{(h, Bh)\}$, then:
\begin{cor}\label{cor:para-cone}
	Under the same assumptions as Lemma~\ref{lem:aniso-lipshitz}, for every $z = (x, df(x)) \in \tcI_{f, g}$,
	\[
		\cP_z \tcI_{f, g} \subset \cC(\cS_A, \cS_B), 
	\]
	where $\cP_z$ is the paratingent cone. 
\end{cor}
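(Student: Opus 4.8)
The plan is to reduce the statement about the paratingent cone to the anisotropic Lipschitz estimate of Lemma~\ref{lem:aniso-lipshitz}, and then to translate the resulting pointwise bound into membership in the symplectic cone via Proposition~\ref{prop:cone-sign}. First I would take a vector $v \in \cP_z \tcI_{f,g}$, so there are sequences $t_k > 0$ and $z_k = (x_k, df(x_k))$, $w_k = (x'_k, df(x'_k))$ in $\tcI_{f,g}$ with $z_k, w_k \to z$ and $t_k(z_k - w_k) \to v$. Writing $v = (h, \eta)$, the spatial parts give $h = \lim t_k(x_k - x'_k)$ and the momentum parts give $\eta = \lim t_k(df(x_k) - df(x'_k))$. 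Since $x_k, x'_k \in K = \argmin(f-g)$, Lemma~\ref{lem:aniso-lipshitz} applies to each pair, yielding
\[
	\Big\| df(x_k) - df(x'_k) - \tfrac12 (A+B)(x_k - x'_k) \Big\|_{U^{-1}} \le \tfrac12 \| x_k - x'_k \|_U .
\]

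Next I would multiply this inequality by $t_k$ (using homogeneity of both norms) and pass to the limit: the left side converges to $\| \eta - \tfrac12(A+B) h \|_{U^{-1}}$ and the right side to $\tfrac12 \| h \|_U$. So the limit vector $v = (h, \eta)$ satisfies
\[
	\Big\| \eta - \tfrac12 (A+B) h \Big\|_{U^{-1}} \le \tfrac12 \| h \|_U, \qquad U = B - A.
\]
The remaining task is purely linear-algebraic: show that this inequality is exactly equivalent to $\Sg_{\cS_A, \cS_B}(v) \ge 0$, after which Proposition~\ref{prop:cone-sign} gives $v \in \cC(\cS_A, \cS_B)$ and we are done. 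To see the equivalence, I would use the decomposition from \eqref{eq:decomp2}: writing $x_1 = U^{-1}(Bh - \eta)$ and $x_2 = U^{-1}(\eta - Ah)$, one has $v = v_1 + v_2$ with $v_1 \in \cS_A$, $v_2 \in \cS_B$ (note $x_1 + x_2 = h$ and $A x_1 + B x_2 = \eta$), and $\Sg_{\cS_A,\cS_B}(v) = x_1^T U x_2$. Setting $y_i = U^{1/2} x_i$, a short computation shows $y_1 + y_2 = U^{-1/2} h$ while $y_1 - y_2 = U^{-1/2}(B h - \eta) - U^{-1/2}(\eta - Ah) = U^{-1/2}((A+B)h - 2\eta)$, so $\tfrac14 \| y_1 - y_2\|^2 = \| \eta - \tfrac12(A+B)h\|_{U^{-1}}^2$ and $\tfrac14\|y_1 + y_2\|^2 = \tfrac14 \|h\|_U^2$; hence $y_1^T y_2 \ge 0$ (equivalently $\Sg(v) \ge 0$) iff $\|y_1 - y_2\| \le \|y_1 + y_2\|$, which is precisely the displayed inequality. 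For the case $v \notin \cS_A + \cS_B$ this does not arise here because the formula for $x_1, x_2$ always produces a valid decomposition when $U$ is invertible.

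The only subtlety I anticipate is book-keeping with the limit: one must make sure the sequences $x_k - x'_k$ and $df(x_k) - df(x'_k)$ scaled by $t_k$ actually converge (which is given, since $v$ is assumed to be a limit point), and that the norms $\|\cdot\|_U$, $\|\cdot\|_{U^{-1}}$ are continuous so the inequality survives the limit — both routine since $U$ is a fixed positive-definite matrix. I would also remark that the local version of Lemma~\ref{lem:aniso-lipshitz} (with $K$ compactly contained in $\Omega$) is what is needed when $f, g$ are only defined on an open convex $\Omega$, and that $\cI_{f,g} = K$ being compact guarantees $z_k, w_k$ eventually stay where the estimate is valid. Thus the corollary follows by combining Lemma~\ref{lem:aniso-lipshitz}, the elementary identity above, and Proposition~\ref{prop:cone-sign}. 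The main conceptual point — and the reason the argument works for the \emph{paratingent} cone rather than merely the contingent or limit-contingent cone — is that Lemma~\ref{lem:aniso-lipshitz} controls pairs of distinct points $x_k, x'_k$ in $K$, not just points converging to a fixed $x$; this is exactly the extra strength needed.
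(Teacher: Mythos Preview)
Your proof is correct and follows essentially the same route as the paper: apply Lemma~\ref{lem:aniso-lipshitz} to pairs in $K$, scale by $t_k$ and pass to the limit, then identify the resulting norm inequality with $\Sg_{\cS_A,\cS_B}(v)\ge 0$ via the decomposition \eqref{eq:decomp2} and invoke Proposition~\ref{prop:cone-sign}. One small slip: with $y_i = U^{1/2}x_i$ and $x_1+x_2=h$ you get $y_1+y_2 = U^{1/2}h$, not $U^{-1/2}h$; your stated conclusion $\tfrac14\|y_1+y_2\|^2 = \tfrac14\|h\|_U^2$ is nonetheless the right one.
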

\begin{proof}
	Consider for $i =1, 2$ and $n \in \N$,   $(x_i^n, p_i^n) \in \tcI$, $(x_i^n, p_i^n) \to z$, $t_n > 0$, and $(t_n(x_2^n - x_1^n), t_n(p_2^n - p_1^n)) \to (h, k) \in \R^n \times \R^n$. Lemma~\ref{lem:aniso-lipshitz} implies
	\[
		\left\| p_2^n - p_1^n - \frac12(A + B)(x_2^n - x_1^n) \right\|_{U^{-1}} \le \frac12 \|x_2^n - x_1^n\|_U. 
	\]
	Multiply by $t_n$ and take limit, we get 
	\[
		\cP_z \tcI_{f, g} \subset \left\{(h, k): \quad \|k  - \frac12 (A + B) h\|_{U^{-1}} \le \frac12 \|h\|_U \right\} , 
	\]
 	it suffices to show the right hand side is equal to $\cC(\cS_A, \cS_B)$.

	We apply \eqref{eq:decomp2} with $S_2 = B$, $S_1 = A$ and $U = B - A$, then 
	\[
		(h , k) = \left( h_1 + h_2, Ah_1 + Bh_2  \right), \quad h_1 = -U^{-1} k + U^{-1} B h, \quad h_2 = U^{-1} k - U^{-1} A h	. 
	\]
		Denote $q = k - \frac12(A + B)h$, 
	\[
		\begin{aligned}
			& 	\Sg_{\cS_A, \cS_B}( (h, k) ) = Uh_1 \cdot h_2 = (-k + B h) \cdot U^{-1}(k - A h) \\
			& = \left(  -q + \frac12 U h \right) \cdot U^{-1} \left( q + \frac12 U h \right) =  
			- \|q\|_{U^{-1}}^2 + \frac14 \|h\|_U^2, 
		\end{aligned}
	\]
	therefore 
	\[
		\left\| k  - \frac12 (A + B) h \right\|^2_{U^{-1}} \le \frac14 \|h\|_U^2 
		\quad \Leftrightarrow
		\quad \Sg_{\cS_A, \cS_B}( (h,k) ) \ge 0
	\]
	which is exactly what we need in view of Proposition~\ref{prop:cone-sign}. 
\end{proof}

\section{The Aubry set and the Green bundles}
\label{sec:green-bundle}

Let $L$ denote the Lagrangian associated to $H$. The action function is
\[
	A^t(x, y) = \inf \left\{ \int_0^t L(\gamma, \dot{\gamma}) dt: \quad \gamma(0)= x, \, \gamma(t) = y \right\}. 
\]
The (backward) Lax-Oleinik semi-group $T_t: C(\T^n) \to C(\T^n)$ is defined as 
\[
	T_t u(y) = \min_{x \in \T^n} \{ u(x) + A^t(x, y) \}, 
\]
and $u: \T^n \to \T$ is called a weak KAM solution if $T_t u = u$. The forward semi-group is
\[
	T^+_t u(x) = \max_{y \in \T^n} \{ u(y) - A^t(x, y)\}. 
\]
$u$ is called a weak KAM solution if there is $c \in \R$ such that  $T_t u + ct = u$. Similarly, $w$ is called a forward weak KAM solution if $T^+_t w - ct = w$. We refer to \cite{Fat2011} for a wealth of information on weak KAM theory. 

The Mather set $\tcM$ is the support of all minimal invariant probabilities to the Euler-Lagrange flow, namely, ones that minimizes $\int L(x, v) d\mu(x, v)$. The projected Mather set $\cM$ is its projection to $\T^n$. Fathi (\cite{Fat2011}) showed that given any weak KAM solution $u$, there is a unique forward solution $w \le u$ such that $u = w$ on $\cM$. The pair $(u, w)$ is called a \emph{conjugate pair}. 

Let $(u, w)$ be a conjugate pair, $\cI_{u, w}$ and $\tcI_{u, w}$ as in \eqref{eq:cI}, we define the Aubry set 
\[ 
	\tcA = \bigcap \{ \tcI_{u, w}:\quad 	(u, w) \text{ is a conjugate pair }
	\}. 
\]
Each $\tcI_{u, w}$ is contained in a  Lipschitz graph with a uniform Lipschitz constant due to Corollary~\ref{cor:graph-theorem}. Each $\tcI_{u, w}$, and therefore $\tcA$, is a compact invariant set of the Hamiltonian flow.

An orbit $z(t) = (x, p)(t)$ is called \emph{disconjugate}  if for all $t_1, t_2 \in \R$, we have 
\[
	D \phi_{t_1}^{t_2}\,  \bV(x(t_1), p(t_1)) \pitchfork \bV(x(t_2), p(t_2)), 
\]
where $\bV(x, p) = \{0\} \times \R^n \subset T_{(x, p)}(\T^n \times \R^n)$ is called the vertical subspace. Every orbit in the set $\tcI_{u, w}$ is disconjugate. Given a disconjugate orbit, we define the pre-Green bundles
\[
	\cG_{t}(z) = D\phi_{t} \, \bV(\phi_{-t}z), \quad \cG_{-t}(z) = \left( D\phi_t \right)^{-1} \bV(\phi_t z). 
\]
$\cG_t(z)$ are Lagrangian subspaces given by symmetric matrices $G_t(z)$. 
\begin{prop}[See for example \cite{Gre58}, \cite{CI1999}, \cite{Itu2002}, \cite{BM2004}]
	For all $s, t>0$, $G_{-s} > G_t$, and  $G_{-t}$ is decreasing in $t > 0$ and $G_t$ increasing in $t >0$. As a result
	\[
		\cG_+(z) = \lim_{t \to \infty} \cG_{t}(z) , \quad 
		\cG_-(z) = \lim_{t \to \infty} \cG_{-t}(z)
	\]
	are invariant subbundles along the orbit of $z$. 
\end{prop}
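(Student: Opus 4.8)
The plan is to obtain all three assertions from one Schur--complement (reduced Hessian) identity for the action, the only inputs being the convexity of $H$ in $p$ and the disconjugacy hypothesis. Fix a disconjugate orbit $z(\cdot)$, write $z=z(0)$, and let $\pi$ be the base projection. Disconjugacy says exactly that for every $t\neq 0$ the flow differential carries the vertical $\bV$ to a subspace transverse to $\bV$; hence, on a neighbourhood of the orbit segment, $\phi_t$ admits a smooth generating function of the first kind $\Sigma_t(\xi,\eta)$, with $\xi,\eta$ the configurations at the two ends of a duration-$t$ arc and $p_{\text{in}}=-\partial_\xi\Sigma_t$, $p_{\text{out}}=\partial_\eta\Sigma_t$ (for an orbit in $\tcA$, which is minimising, one may simply take $\Sigma_t=A^t$ near the minimising segment). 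A one-line linearisation then identifies the symmetric matrices: since $\cG_t(z)$ is the image of $\{\delta\xi=0\}$ under the flow differential and $\cG_{-s}(z)$ the image of $\{\delta\eta=0\}$ under its inverse, one gets $G_t(z)=\partial^2_{\eta\eta}\Sigma_t(\pi z(-t),\pi z)$ and $G_{-s}(z)=-\,\partial^2_{\xi\xi}\Sigma_s(\pi z,\pi z(s))$.

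The heart of the argument is to differentiate the composition law $\Sigma_{t+s}(\xi,\zeta)=\mathrm{crit}_{\eta}\big(\Sigma_s(\xi,\eta)+\Sigma_t(\eta,\zeta)\big)$ twice, the critical point in $\eta$ being the intermediate configuration of the orbit. Eliminating $\eta$ by the implicit function theorem gives
\[
	\partial^2_{\zeta\zeta}\Sigma_{t+s}\;=\;\partial^2_{\zeta\zeta}\Sigma_t\;-\;(\partial^2_{\eta\zeta}\Sigma_t)^{T}\big(\partial^2_{\eta\eta}\Sigma_s+\partial^2_{\eta\eta}\Sigma_t\big)^{-1}\partial^2_{\eta\zeta}\Sigma_t,
\]
together with the mirror identity for $\partial^2_{\xi\xi}$. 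Two facts make this bite. First, the mixed partial $\partial^2_{\eta\zeta}\Sigma_t$ is invertible, being a block of the flow differential that is nonsingular for the same transversality that makes $\Sigma_t$ well defined, so the subtracted term is positive definite as soon as $\partial^2_{\eta\eta}\Sigma_s+\partial^2_{\eta\eta}\Sigma_t$ is. Second, that middle matrix is precisely the Hessian in the splicing variable of $\Sigma_s(\xi,\cdot)+\Sigma_t(\cdot,\zeta)$ at the orbit point, i.e.\ the reduced second variation of the action over fixed-endpoint variations of the arc, which by Jacobi's theorem for a convex Lagrangian is positive definite exactly because the orbit has no conjugate points. This yields the strict monotonicity in $t$ of the symmetric matrices $G_{\pm t}(z)$; applying the same positivity to the splitting of a duration-$(t+s)$ arc at the point $z$ itself gives $\partial^2_{\eta\eta}\Sigma_t(\pi z(-t),\pi z)+\partial^2_{\xi\xi}\Sigma_s(\pi z,\pi z(s))>0$, which is the asserted strict comparison between $G_t(z)$ and $G_{-s}(z)$. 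The real obstacle is here: one must carefully match the abstract disconjugacy hypothesis (a transversality of the linearised flow with $\bV$) with the positive-definiteness of these action Hessians, and this is the one place where the convexity of $H$ is genuinely used (via the Morse-theoretic reduction and Jacobi's theorem); transversality alone only gives invertibility, not the sign.

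Granting this, the rest is routine. Each of the families $\{G_t(z)\}_{t>0}$ and $\{G_{-t}(z)\}_{t>0}$ is monotone in the order of symmetric matrices and, by the comparison, bounded by a fixed member of the other family; hence both converge to symmetric matrices $G_\pm(z)$, which define Lagrangian subspaces $\cG_\pm(z)$, and the comparison passes to the limit. Invariance is soft: from the definition and the cocycle property of the flow differential one has $\cG_t(\phi_\tau z)=D\phi_\tau\,\cG_{t-\tau}(z)$ and $\cG_{-t}(\phi_\tau z)=D\phi_\tau\,\cG_{-(t+\tau)}(z)$, and since $D\phi_\tau$ is a linear isomorphism it commutes with the limit as $t\to\infty$, giving $\cG_\pm(\phi_\tau z)=D\phi_\tau\,\cG_\pm(z)$ along the orbit. (If one prefers to avoid generating functions, an equivalent route is to write the Riccati equation $\dot G+GH_{pp}G+GH_{px}+H_{xp}G+H_{xx}=0$ satisfied along the orbit by the matrix of $\cG_{\pm t}$, use $H_{pp}>0$, and extract the same monotonicity and comparison from Riccati comparison theory; the bookkeeping is heavier but the logic is identical.)
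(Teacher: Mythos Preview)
The paper does not give a proof of this proposition; it is quoted from the literature (Green, Contreras--Iturriaga, Iturriaga, Bialy--MacKay). Your argument via generating functions and the Schur--complement identity for the composed action is correct and is essentially the proof of Bialy--MacKay (and, in a different presentation, Contreras--Iturriaga); the Riccati alternative you mention at the end is closer to Green's original approach for geodesic flows. The step you flag as delicate---upgrading disconjugacy to \emph{positive}-definiteness of the broken-action Hessian via Morse/Jacobi theory---is handled correctly: the single-break Hessian is a finite-dimensional reduction whose index equals that of the full second variation on the arc, which is zero by disconjugacy, and strict positivity (not just semi-definiteness) follows because the endpoints themselves are not mutually conjugate.

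One remark on signs. Your Schur--complement formula gives $\partial^2_{\zeta\zeta}\Sigma_{t+s}<\partial^2_{\zeta\zeta}\Sigma_t$, so $G_t$ comes out \emph{decreasing} in $t$, and the mirror identity makes $G_{-t}$ \emph{increasing}; the splicing positivity at $z$ reads $G_t(z)>G_{-s}(z)$. These are the standard directions and the ones consistent with $\cG_-\le\cG_+$ stated in the introduction; the inequalities as printed in the proposition appear to have the roles of the two families interchanged, so do not be troubled that your computation yields the opposite monotonicity from the displayed statement.
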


\begin{prop}[\cite{Arn2011}] \label{prop:act-green-bundle}
Suppose $\gamma: \R \to \T^n $ is a minimizing orbit. Then for each $t-s = T > 0$, the function $A^T(x, y)$ is a $C^2$ function in a neighborhood of $(\gamma(s), \gamma(t))$. Moreover, we have 
\[
	G_{T}(\gamma(t)) = \partial^2_{22} A^T(\gamma(s), \gamma(t)), \quad
	 G_{-T}(\gamma(s)) =  - \partial^2_{11} A^T(\gamma(s), \gamma(t))
\]
\end{prop}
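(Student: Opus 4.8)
The plan is to first upgrade the hypothesis that $\gamma$ is minimizing into genuine $C^2$ regularity of $A^T$ near $(\gamma(s),\gamma(t))$, and then to obtain the two formulas by differentiating the classical generating-function identities for $A^T$ and recognizing the resulting Hessians as the graphs of the pre-Green matrices. For the regularity: since $\gamma$ extends to a minimizing orbit on all of $\R$, every subsegment — and in particular a slight enlargement $\gamma|_{[s-\eps,t+\eps]}$ — is an action minimizer with its endpoints fixed; consequently $\gamma|_{[s,t]}$ carries no conjugate points, the second variation of the fixed-endpoint action at $\gamma|_{[s,t]}$ is positive definite, and the orbit is disconjugate: $D\phi_T\,\bV(\gamma(s),p(s))\pitchfork \bV(\gamma(t),p(t))$, where $p(\tau)=\partial_v L(\gamma(\tau),\dot\gamma(\tau))$. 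This transversality is exactly the non-degeneracy needed to solve the Euler--Lagrange two-point boundary value problem near $\gamma|_{[s,t]}$ by the implicit function theorem, producing a $C^1$ family of extremals $\xi_{x,y}$ with $\xi_{x,y}(s)=x$ and $\xi_{x,y}(t)=y$. The Tonelli a priori compactness estimates force any minimizer between endpoints close to $(\gamma(s),\gamma(t))$ to stay $C^1$-close to $\gamma|_{[s,t]}$, hence to coincide with $\xi_{x,y}$; therefore $A^T(x,y)=\int_s^t L(\xi_{x,y},\dot\xi_{x,y})\,d\tau$ and $A^T$ is $C^2$ in a neighborhood of $(\gamma(s),\gamma(t))$.

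For the identification, the first-variation formula gives, on this neighborhood, $\partial_1 A^T(x,y) = -\partial_v L(\xi_{x,y}(s),\dot\xi_{x,y}(s))$ and $\partial_2 A^T(x,y) = \partial_v L(\xi_{x,y}(t),\dot\xi_{x,y}(t))$, i.e. in Hamiltonian coordinates $\phi_T\big(x,-\partial_1 A^T(x,y)\big)=\big(y,\partial_2 A^T(x,y)\big)$. Fix $x=\gamma(s)$ and vary $y$: the curve $y\mapsto\big(\gamma(s),-\partial_1 A^T(\gamma(s),y)\big)$ lies in the vertical fiber over $\gamma(s)$, and $\phi_T$ carries it to $y\mapsto\big(y,\partial_2 A^T(\gamma(s),y)\big)$. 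Differentiating at $y=\gamma(t)$, the tangent of the image is on the one hand $D\phi_T\,\bV\big(\phi_{-T}(\gamma(t),p(t))\big)=\cG_T(\gamma(t))=\{(h,G_T(\gamma(t))h)\}$, and on the other hand the graph of $\partial^2_{22}A^T(\gamma(s),\gamma(t))$; hence $G_T(\gamma(t))=\partial^2_{22}A^T(\gamma(s),\gamma(t))$. Symmetrically, fixing $y=\gamma(t)$ and varying $x$, the vertical curve $x\mapsto\big(\gamma(t),\partial_2 A^T(x,\gamma(t))\big)$ over $\gamma(t)$ is carried by $(\phi_T)^{-1}$ to $x\mapsto\big(x,-\partial_1 A^T(x,\gamma(t))\big)$; differentiating at $x=\gamma(s)$ identifies the graph of $-\partial^2_{11}A^T(\gamma(s),\gamma(t))$ with $(D\phi_T)^{-1}\bV\big(\phi_T(\gamma(s),p(s))\big)=\cG_{-T}(\gamma(s))=\{(h,G_{-T}(\gamma(s))h)\}$, which yields $G_{-T}(\gamma(s))=-\partial^2_{11}A^T(\gamma(s),\gamma(t))$.

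The only delicate step is the regularity claim: one must combine the absence of conjugate points along minimizing extremals, the a priori compactness that pins nearby minimizers to $\gamma$ (so that the extremal produced by the implicit function theorem really is the minimizer, not merely a critical point), and the disconjugacy, which does double duty as the non-degeneracy for the implicit function theorem and as the guarantee that $\cG_{\pm T}$ are transversal to $\bV$ and hence are graphs of well-defined symmetric matrices $G_{\pm T}$. Once $C^2$ regularity of $A^T$ is in hand, the computations in the identification step are routine first-variation bookkeeping.
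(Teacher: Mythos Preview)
The paper does not supply a proof of this proposition: it is quoted from \cite{Arn2011} and used as a black box, so there is no argument in the paper to compare against. Your write-up is correct and is essentially the standard proof one finds in the literature (disconjugacy of minimizing orbits $\Rightarrow$ $C^2$ regularity of $A^T$ via the implicit function theorem, followed by differentiating the generating-function relation $\phi_T(x,-\partial_1 A^T)=(y,\partial_2 A^T)$ to read off the pre-Green matrices as Hessians).

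One small remark: in your regularity step you should also say why the family $\xi_{x,y}$ produced by the implicit function theorem depends $C^2$ (not merely $C^1$) on $(x,y)$, since $A^T$ being $C^2$ requires this. This follows because the boundary-value map $(x_0,p_0)\mapsto(x_0,\pi\phi_T(x_0,p_0))$ is as smooth as the flow, hence its local inverse is $C^\infty$ for a $C^\infty$ Tonelli Hamiltonian (and $C^{k-1}$ in the $C^k$ case); you only need $C^2$, which is guaranteed by the standing Tonelli hypotheses. Otherwise the argument is complete.
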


\begin{lem} \label{lem:local-semi-concave}
Let $(u, w)$ be a conjugate pair of weak KAM solutions, and let $\gamma(t)$ be the projection of an orbit in $\tcI_{u, w}$. Then for each $\epsilon > 0$,  there is a neighborhood $V$ of $x_0 = \gamma(0)$ on which
\begin{itemize}
 \item  $u$ is $(G_{T}(x_0) + \epsilon \Id)$-semi-concave;
 \item  $w$ is $ -(G_{-T}(x_0) - \epsilon \Id)$-semi-convex.  
\end{itemize}
\end{lem}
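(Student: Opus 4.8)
The plan is to dominate $u$ locally from above by the $C^{2}$ functions the action provides at each base point, and dually to minorize $w$. Fix $\epsilon>0$ and let $c$ be the constant with $T_{T}u+cT=u$. Since $u$ is a (backward) weak KAM solution, through every $x\in\T^{n}$ there is a $u$-calibrated curve $\gamma_{x}\colon(-\infty,0]\to\T^{n}$ with $\gamma_{x}(0)=x$, and calibration gives
\[
	u(y)\le u(\gamma_{x}(-T))+A^{T}(\gamma_{x}(-T),y)+cT,\qquad y\in\T^{n},
\]
with equality at $y=x$; subtracting, $u(y)-u(x)\le A^{T}(\gamma_{x}(-T),y)-A^{T}(\gamma_{x}(-T),x)$ for all $y$. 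A $u$-calibrated curve is a minimizing extremal without conjugate points, so Proposition~\ref{prop:act-green-bundle} applied to $\gamma_{x}|_{[-T,0]}$ shows $y\mapsto A^{T}(\gamma_{x}(-T),y)$ is $C^{2}$ near $x$. Hence, if we can find a \emph{convex} neighbourhood $V$ of $x_{0}$ such that for every $x\in V$ the function $y\mapsto A^{T}(\gamma_{x}(-T),y)$ is $C^{2}$ on $V$ with Hessian $\le G_{T}(x_{0})+\epsilon\Id$ there, then Taylor-expanding along the segment $[x,y]\subset V$ with $\ell_{x}:=\partial_{2}A^{T}(\gamma_{x}(-T),x)$ as super-gradient gives $u(y)-u(x)-\ell_{x}\cdot(y-x)\le\tfrac12(G_{T}(x_{0})+\epsilon\Id)(y-x)^{2}$ for all $x,y\in V$, i.e.\ $u$ is $(G_{T}(x_{0})+\epsilon\Id)$-semi-concave on $V$.

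Such a $V$ exists because $\gamma_{x}\to\gamma$ in $C^{1}_{\mathrm{loc}}$ on $(-\infty,0]$ as $x\to x_{0}$. Indeed, by a priori compactness of minimizers the family $\{\gamma_{x}\}$ (for $x$ near $x_{0}$) is $C^{1}_{\mathrm{loc}}$-precompact, and any limit is again a $u$-calibrated curve with value $x_{0}$ at time $0$; but $x_{0}=\gamma(0)\in\cI_{u,w}$, so by the definition of $\tcI_{u,w}$ in \eqref{eq:cI} the function $u$ is differentiable at $x_{0}$, with $p_{0}:=d_{x_{0}}u$ the momentum of the orbit of $\gamma$ at time $0$, and then the $u$-calibrated curve on $(-\infty,0]$ ending at $x_{0}$ is unique (\cite{Fat2011}), equal to $t\mapsto\pi\phi_{t}(x_{0},p_{0})$, i.e.\ to $\gamma|_{(-\infty,0]}$. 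So every convergent subsequence of $\{\gamma_{x}\}$ tends to $\gamma$, hence $\gamma_{x}\to\gamma$, and in particular $\gamma_{x}(-T)\to\gamma(-T)$. Now apply Proposition~\ref{prop:act-green-bundle} to $\gamma$ itself: $A^{T}$ is $C^{2}$ on a product $U_{1}\times U_{2}$ with $\gamma(-T)\in U_{1}$, $x_{0}\in U_{2}$, the map $\partial^{2}_{22}A^{T}$ is continuous (hence uniformly continuous on compacta) there, and $\partial^{2}_{22}A^{T}(\gamma(-T),x_{0})=G_{T}(x_{0})$. Choose a closed ball $\bar B\subset U_{2}$ about $x_{0}$ with $\partial^{2}_{22}A^{T}(\gamma(-T),\cdot)\le G_{T}(x_{0})+\tfrac{\epsilon}{2}\Id$ on $\bar B$; then, by uniform continuity and $\gamma_{x}(-T)\to\gamma(-T)$, pick a ball $V\subset B$ about $x_{0}$ so small that every $x\in V$ has $\gamma_{x}(-T)\in U_{1}$ and $\sup_{y\in\bar B}\|\partial^{2}_{22}A^{T}(\gamma_{x}(-T),y)-\partial^{2}_{22}A^{T}(\gamma(-T),y)\|\le\tfrac{\epsilon}{2}$. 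This $V$ has the required property.

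The statement for $w$ is the mirror image, using that $w$ is a \emph{forward} weak KAM solution. For $x$ near $x_{0}$ let $\sigma_{x}\colon[0,\infty)\to\T^{n}$ be the forward $w$-calibrated curve with $\sigma_{x}(0)=x$; calibration gives $w(y)\ge w(\sigma_{x}(T))-A^{T}(y,\sigma_{x}(T))-cT$ with equality at $y=x$, so $w(y)-w(x)\ge-\bigl(A^{T}(y,\sigma_{x}(T))-A^{T}(x,\sigma_{x}(T))\bigr)$, and $y\mapsto A^{T}(y,\sigma_{x}(T))$ is $C^{2}$ near $x$ by Proposition~\ref{prop:act-green-bundle}, while $\partial^{2}_{11}A^{T}(x_{0},\gamma(T))=-G_{-T}(x_{0})$ (Proposition~\ref{prop:act-green-bundle} for $\gamma$ with $s=0$, $t=T$). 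Since $d_{x_{0}}w=d_{x_{0}}u=p_{0}$ (recall $du=dw$ on $\cI_{u,w}$), the forward $w$-calibrated curve through $x_{0}$ is unique and equals $\gamma|_{[0,\infty)}$, so $\sigma_{x}\to\gamma$ and $\sigma_{x}(T)\to\gamma(T)$; shrinking the neighbourhood as before so that $\partial^{2}_{11}A^{T}(y,\sigma_{x}(T))\le-G_{-T}(x_{0})+\epsilon\Id$ there, a Taylor expansion with sub-gradient $m_{x}:=-\partial_{1}A^{T}(x,\sigma_{x}(T))$ yields $w(y)-w(x)-m_{x}\cdot(y-x)\ge\tfrac12\bigl(G_{-T}(x_{0})-\epsilon\Id\bigr)(y-x)^{2}$, i.e.\ $w$ is $-(G_{-T}(x_{0})-\epsilon\Id)$-semi-convex there. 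Intersecting the two neighbourhoods gives the single $V$ of the statement.

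\textbf{Where the difficulty lies.} The heart of the matter is the uniformity in the second paragraph: that the minimizing segment realizing the Lax--Oleinik infimum (resp.\ supremum) through a point $x$ near $x_{0}$ stays $C^{1}$-close to $\gamma$ \emph{uniformly in $x$}. This is the upper semicontinuity of calibrated curves together with the uniqueness of the calibrated curve through the differentiability point $x_{0}$; granted this, the fixed $C^{2}$-domain $U_{1}\times U_{2}$ furnished by Proposition~\ref{prop:act-green-bundle} for $\gamma$ does the rest by continuity. A subsidiary point to check is that Proposition~\ref{prop:act-green-bundle} may be invoked for $\gamma_{x}$ and $\sigma_{x}$, which need not lie in $\tcA$ or be globally minimizing; but they are calibrated, hence minimizing on every finite interval and free of conjugate points, which is all that proposition's proof uses.
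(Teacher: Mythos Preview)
Your proof is correct and follows essentially the same route as the paper's: dominate $u(y)-u(y_1)$ by $A^T(\gamma_1(-T),y)-A^T(\gamma_1(-T),y_1)$ via calibration, then use Proposition~\ref{prop:act-green-bundle} and continuity of $\partial^2_{22}A^T$ near $(\gamma(-T),x_0)$. The paper simply asserts that ``by choosing $\delta'$ small, we can assume $\gamma_i(-T)\in B_\delta(\gamma(-T))$''; your second paragraph supplies the justification (upper semicontinuity of calibrated curves plus uniqueness at the differentiability point $x_0$), so your write-up is in fact more complete than the original on precisely the point you flag as the heart of the matter.
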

\begin{proof}
By Proposition~\ref{prop:act-green-bundle}, the functions $A^T(x, y)$ is $C^2$ near both $(\gamma(-T), x_0)$ and $(x_0, \gamma(T))$. Using the relation of second derivatives in Proposition~\ref{prop:act-green-bundle},  for any $\epsilon > 0$, there is $\delta > 0$ such that for all $x \in B_\delta(\gamma(-T))$, the function  of $A^T(x, \cdot)$ is $G_{-T}(x_0) + \epsilon \Id$ semi-concave on $B_\delta(x_0)$,  here $B_\delta(x)$ denote ball of radius $\delta$ at $x$. 

Let $y_1, y_2 \in B_{\delta'}(\gamma(-T))$, where $\delta' < \delta$ is to chosen, then there exists minimizing curves $\gamma_1, \gamma_2 :(-\infty, 0] \to \T^n$  (called calibrated curves, see \cite{Fat2011}) such that $\gamma_i(0) = y_i$, $i = 1, 2$ and $u(y_i) = u(\gamma_i(-t)) +  A^t(\gamma_i(-t), y_i)$. By choosing $\delta'$ small, we can assume $\gamma_i(-T) \in B_\delta(\gamma(-T))$, and as a result
\[
\begin{aligned}
 	u(y_2) - u(y_1) & = u(y_2) - u(\gamma_1(-T)) + A^T(\gamma_1(-T), y_1) \\
& \le A^T(\gamma_1(-T), y_2) - A^T(\gamma_1(-T), y_1) \\
&  \le l_{y_1}(y_2 - y_1) + \frac12 \left( G_{T}(x_0) + \epsilon \Id \right) (y_2 - y_1)^2
\end{aligned}
\]
by semi-concavity of $A^T(\gamma_1(-T), \cdot)$. The proof for semi-convexity of $w$ is similar. 
\end{proof}

\begin{proof}[Proof of Theorem~\ref{thm:main}]
Let $\cG_{\pm T}^\epsilon$ denote the Lagrangian subspaces associated to $ G_{\mp T}(z_0) \pm \epsilon \Id$, then Lemma~\ref{lem:local-semi-concave}, together with Corollary~\ref{cor:para-cone} implies 
\[
	\cP_{z_0} \tcI_{u, w} \subset \cC(\cG_{-T}^\epsilon, \cG_{T}^\epsilon). 
\]
Take $T \to \infty$, we get 
\[
	\cP_{z_0} \tcI_{u, w} \subset \cC(\cG_-^\epsilon, \cG_+^\epsilon),  
\]
where $\cG_\pm^\epsilon$ are defined by $ G_\pm \pm \epsilon \Id$. Finally, we get our conclusion by taking intersection over all $\epsilon$. 
\end{proof}

\subsection*{Acknowledgment}
The author thanks the referee for suggesting using reduction for degenerate case of the proof of Proposition 2.2, and for many corrections leading to improvements of the paper. He also thanks Vadim Kaloshin and Alfonso Sorrentino for discussions leading to this paper, and Marie-Claude Arnaud on helpful comments on a draft version. K.Z. is supported by the NSERC Discovery grant, reference number 436169-2013.

\bibliographystyle{abbrv}
\bibliography{green-bundles}

\end{document}